\newtheorem{theorem}{Theorem}[section]
\newtheorem{claim}[theorem]{Claim}
\newtheorem{corollary}[theorem]{Corollary}
\newtheorem{lemma}[theorem]{Lemma}
\newtheorem{definition}[theorem]{Definition}
\newtheorem{problem}[theorem]{Problem}
\newenvironment{proof}{\noindent\mbox{\bf Proof.}}{\mbox{$\dashv$}\bigskip}
\def\S{\mathcal{S}}
\def\halt{{\sf H}}
\def\PA{{\sf PA}}
\def\HA{{\sf HA}}
\def\tr{{\sf Tr}}
\def\II{{\cal I}}
\def\ax{{\sf Axiom}}
\def\lr{\leftrightarrow}
\def\Ra{\Rightarrow}
\def\La{\Leftarrow}
\def\LR{\Leftrightarrow}
\def\all{\forall}
\def\ex{\exists}
\def\M{{\mathfrak M}}
\def\MM{{\cal M}}
\def\K{{\bf K}}
\def\N{{\cal N}}
\def\KM{\K(\M,\T)}
\def\r{{\bf q}^{PR}}
\def\x{{\vec{x}}}
\def\y{{\vec{ y}}}
\def\L{\mathcal{L}}
\def\r{\:{\bf r}\:}
\def\jl{j_1}
\def\jr{j_2}
\def\T{{\bf T_\M}}
\def\ect{\mathsf{ECT_0}}
\def\th{{\bf Th}}
\def\un{\underline}
\def\n{\bar{n}}
\def\TT{{\sf T}}
\def\I{{\bf I}}
\def\MP{{\sf MP}}
\def\diag{\sf Diag}
\def\H{\diag(\M)}
\def\D{{\sf Dom}}
\def\prof{{\sf Proof}}
\def\pr{{\sf Pr}}
\def\con{{\sf Con}}
\def\ep{{\sf EP}}
\def\ext{{\sf EXT}}
\def\rfn{{\sf RFN}}
\def\mp{{\sf MP}}
\def\C{{\bf C}}
\def\PRA{{\sf PRA}}
\def\U{{\cal U}}
\DeclareFontFamily{U}{MnSymbolC}{}
\DeclareFontShape{U}{MnSymbolC}{m}{n}{
    <-6>  MnSymbolC5
    <6-7>  MnSymbolC6
    <7-8>  MnSymbolC7
    <8-9>  MnSymbolC8
    <9-10> MnSymbolC9
    <10-12> MnSymbolC10
    <12->   MnSymbolC12}{}
\DeclareSymbolFont{MnSyC}{U}{MnSymbolC}{m}{n}
\DeclareMathSymbol{\dotminus}{\mathbin}{MnSyC}{24}
\begin{document}
\title{Not all Kripke models of $\HA$ are locally $\PA$}
\author[1,2]{Erfan Khaniki\thanks{e.khaniki@gmail.com}}

\affil[1]{Faculty of Mathematics and Physics\\
Charles University}
\affil[2]{Institute of Mathematics\\
Czech Academy of Sciences}

\maketitle
{\centering\footnotesize To Mohammad Ardeshir\par}
\begin{abstract}
    Let $\K$ be an arbitrary Kripke model of Heyting Arithmetic, $\HA$. For every node $k$ in $\K$, we can view the classical structure of $k$, $\M_k$ as a model of some classical theory of arithmetic. Let $\TT$ be a classical theory in the language of arithmetic. We say $\K$ is locally $\TT$, iff for every $k$ in $\K$, $\M_k\models\TT$. One of the most important problems in the model theory of $\HA$ is the following question: {\it Is every Kripke model of $\HA$ locally $\PA$?}
    We answer this question negatively. We introduce two new Kripke model constructions to this end. The first construction actually characterizes the arithmetical structures that can be the root of a Kripke model $\K\Vdash\HA+\ect$ ($\ect$ stands for Extended Church Thesis). The characterization says that for every arithmetical structure $\M$, there exists a rooted Kripke model $\K\Vdash\HA+\ect$ with the root $r$ such that $\M_r=\M$ iff $\M\models\th_{\Pi_2}(\PA)$. One of the consequences of this characterization is that there is a rooted Kripke model $\K\Vdash\HA+\ect$ with the root $r$ such that $\M_r\not\models\I\Delta_1$ and hence $\K$ is not even locally $\I\Delta_1$. The second Kripke model construction is an implicit way of doing the first construction which works for any reasonable consistent intuitionistic arithmetical theory $\TT$ with a recursively enumerable set of axioms that has the existence property. We get a sufficient condition from this construction that describes when for an arithmetical structure $\M$, there exists a rooted Kripke model $\K\Vdash \TT$ with the root $r$ such that $\M_r=\M$. As applications of this sufficient condition, we construct two new Kripke models. The first one is a Kripke model $\K\Vdash \HA+\neg\theta+\mp$ ($\theta$ is an instance of $\ect$ and $\mp$ is Markov's principle) which is not locally $\I\Delta_1$. The second one is a Kripke model $\K\Vdash\HA$ such that $\K$ forces exactly the sentences that are provable from $\HA$, but it is not locally $\I\Delta_1$. Also, we will prove that every countable Kripke model of intuitionistic first-order logic can be transformed into another Kripke model with the full infinite binary tree as the Kripke frame such that both Kripke models force the same sentences. So with the previous result, there is a binary Kripke model $\K$ of $\HA$ such that $\K$ is not locally $\I\Delta_1$. 
\end{abstract}
\section{Introduction}
Heyting Arithmetic ($\HA$) is the intuitionistic counterpart of Peano Arithmetic ($\PA$). $\HA$ has the same non-logical axioms as $\PA$ with intuitionistic first-order logic as the underlying logic. This theory is one of the well-known and most studied theories of constructive mathematics, and it was investigated in many proof-theoretic and model-theoretic aspects in the literature (see \cite{tv} for more information). This paper aims to answer a question about the model theory of $\HA$. Let $\TT$ be a classical theory in the language of arithmetic. A Kripke model of $\HA$ is called locally $\TT$, iff for every node $k\in\K$, the classical structure $\M_k$ associated with $k$, is a model of $\TT$. One of the most important problems in the model theory of $\HA$ is the following question:
\begin{problem}\label{p1}
        Is every Kripke model of $\HA$ locally $\PA$?
\end{problem}
This problem was first asked and investigated in the seminal paper \cite{van} by van Dalen et al. in 1986. They proved that every finite Kripke model of $\HA$ is locally $\PA$. Furthermore, they proved that a Kripke model of $\HA$ with the Kripke frame $(\omega, \leq)$ as the underlying frame has infinitely many locally $\PA$ nodes. This work initiated a research line into Problem \ref{p1} and also about the following general question:
\begin{problem}\label{p2}
        For a Kripke model $\K$ of the theory $\TT$ in a language $\sigma$, and a node $k\in\K$, what is the relationship between the sentences forced in $k$ and the sentences satisfied in $\M_k$?
\end{problem}
There are several works that deal with these problems. We will review those works in the following paragraphs. Wehmeier in \cite{weh},  investigated Problem \ref{p1} and extended the results of \cite{van} to a larger class of frames. In particular, he proved that every Kripke model of $\HA$ with $(\omega, \leq)$ as the Kripke frame is indeed locally $\PA$. Moniri, in \cite{mon}, considered these problems and proved that every once-branching Kripke model of $\HA+\MP$ (Markov's principle) is locally $\PA$. Ardeshir and Hesaam in \cite{ah} generalized the results of \cite{weh} to rooted narrow tree Kripke models of $\HA$. Recently, Mojtahedi in \cite{moj} considered Problem \ref{p2} and answered this problem in the case of finite depth Kripke models. As an application, he generalized the result of \cite{ah} to rooted semi-narrow tree Kripke models of $\HA$.

Regarding Problem \ref{p1}, the strongest positive result about the strength of induction axioms that are true in a node of a Kripke model of $\HA$ was proved by Marković in \cite{mar}. He proved that every node of a Kripke model of $\HA$ satisfies induction for formulas that are provably $\Delta_1$ in $\PA$. Also, from $\Pi_2$ conservativity of $\PA$ over $\HA$ (see \cite{fr}), we know that every Kripke model of $\HA$ is locally $\th_{\Pi_2}(\PA)$.

Buss studied another question related to these problems in \cite{bu}. For every language $\sigma$ and every classical theory $\TT$ in it, he characterized the sentences that are true in every locally $\TT$ Kripke model. As a result, he proved that $\HA$ is complete with respect to the locally $\PA$ Kripke models. In a similar direction, Ardeshir et al. in \cite{ars} presented a set of axiom systems for the class of end-extension Kripke models. As an application, they proved that $\HA$ is strongly complete for its class of end-extension Kripke models. For the case of fragments of $\HA$, Problem \ref{p1} was investigated and answered negatively in \cite{po06}.

To best of our knowledge, the above theorems are all results relevant to Problem \ref{p1} in the literature. There are some other papers such as \cite{abz1,abz2} that investigated Problem \ref{p2} in general and partially answered this question. 

In this paper, we will present two new model construction to answer Problems \ref{p1} and \ref{p2}. The main technical theorem of the first construction says that the theory $\HA+\ect+\H$ for every $\M\models\th_{\Pi_2}(\PA)$ has the existence and disjunction properties (Theorem \ref{t6}). This theorem provides the right tool for constructing rooted Kripke models of $\HA$ with control over the structure of the root (Theorem \ref{t8}). This construction theorem moreover characterizes the necessary and sufficient conditions for an arithmetical structure $\M$ to be the root of a Kripke model of $\HA+\ect$ (Corollary \ref{char}). Using this characterization we will construct a Kripke model of $\HA+\ect$ that is not even locally $\I\Delta_1$. This answers Problem \ref{p1} negatively. Moreover, this is optimal, because it is well-known that every node of a Kripke model of $\HA$ satisfies induction for formulas that are provably $\Delta_1$ in $\PA$. The second construction is an implicit way of doing the first construction and it works for any reasonable consistent intuitionistic arithmetical theory with a recursively enumerable set of axioms that has the existence property (Theorem \ref{t14}). This construction gives us a sufficient condition for an arithmetical structure $\M$ to be the root of a Kripke model of $\TT$. As applications of this sufficient condition, we will construct two new Kripke models. The first one is a Kripke model of $\HA+\neg\theta+\mp$ where $\theta$ is an instance of $\ect$ and $\mp$ is Markov's principle that is not locally $\I\Delta_1$ (Corollary \ref{finF}). The second one is a Kripke model of $\HA$ that forces exactly all sentences that are provable in $\HA$, but it is not locally $\I\Delta_1$ (Corollary \ref{forceHA}). 

The second construction is general and also works for $\HA+\ect$, but some Kripke models can be constructed for $\HA+\ect$ with the first construction, but not possible with the second one. We will discuss this matter in more detail at the end of Section 3. The new model constructions imply the existence of a large class of Kripke models of reasonable intuitionistic arithmetical theories including $\HA$, which cannot be constructed by previous methods, so we think that these model constructions are interesting in their own rights.

We will also prove that every countable Kripke model of intuitionistic first-order logic can be transformed into another Kripke model with the full infinite binary tree as the Kripke frame (Lemma \ref{bin}). Using this result, we will prove that there exists a Kripke model of $\HA$ with the full infinite binary tree as the Kripke frame that is not locally $\I\Delta_1$ (Corollary \ref{bin2}).

\section{Preliminaries}
\subsection{Arithmetical Theories}
Let $\L$ be the language of Primitive Recursive Arithmetic in which it has a function symbol for every primitive recursive function. $\HA$ is the intuitionistic theory with the following non-logical axioms:
\begin{enumerate}
\item Axioms of Robinson Arithmetic $\sf Q$.
\item Axioms defining the primitive recursive functions.
\item For each formula $\phi(x,\vec{y})\in\L$, the axiom $\all \vec{y}\:{\bf I}_\phi$ in which $${\bf I}_\phi:=\phi(\bar{0})\land\all x(\phi(x)\to \phi(Sx))\to\all x\phi(x).$$
\end{enumerate}
$\PA$ is the classical theory that has the same non-logical axioms as $\HA$. $i\PRA$ (intuitionistic Primitive Recursive Arithmetic) has axioms of $\sf Q$ and induction for every atomic formula of $\L$. The underlying logic of $i\PRA$ is intuitionistic logic. $\PRA$ is the classical counter part of $i\PRA$. $\TT\vdash_c\phi$ means that there exists a proof of $\phi$ from axioms of $\TT$ using first-order classical logic Hilbert system. $\vdash_i$ denotes the same thing for intuitionistic proofs. An important set of intuitionistic arithmetical theories for the purpose of this paper is defined in the following definition.
\begin{definition}
$\II$ is the set of all intuitionistic arithmetical theories $\TT$ in $\L$ such that:
\begin{enumerate}
    \item $\TT$ is consistent.
    \item $i\PRA\subseteq \TT$.
    \item The set of axioms of $\TT$ is recursively enumerable.
\end{enumerate}
\end{definition}
Note that with the power of primitive recursive functions we can define finite sequences of numbers, so we can code finite objects such as formulas, proofs, and etc. as numbers. This is a standard technique and it is called G\"odel numbering (see \cite{sm1}). With the help of this coding we can talk about proofs of theories in arithmetical theories (see \cite{sm1}). For every $\L$ sentence $\phi$, $\ulcorner\phi\urcorner$ denotes the number associated with $\phi$. If $\phi(x)$ is an $\L$ formula, then $\ulcorner\phi(\dot{c})\urcorner$ denotes the number associated with $\psi(x)$ when we substitute the numeral with value $c$ for $x$. Suppose $\TT\in\II$. Let $\ax(x,y)$ be the primitive recursive function such that for every $\L$ sentence $\phi$, $\phi$ is a $\TT$-axiom iff $\ex x\ax(x,\ulcorner\phi\urcorner)$ is true. Then it is possible to define the provability predicate of $\TT$, $\prof_\TT(x,y)$ as a primitive recursive predicate as follows. Let $\left<.\right>$ be a natural primitive recursive coding function. Then $\prof_\TT(x,y)$ is true iff there exist two sequences $\L$ sentences $\{\phi_i\}_{i\leq n}$ and numbers $\{w_i\}_{i\leq n}$ for some $n$ such that:
\begin{enumerate}
    \item $x=\left<\left <w_1,\ulcorner\phi_1\urcorner\right>,...,\left<w_n,\ulcorner\phi_n\urcorner\right>\right>$.
    \item For every $i\leq n$:
    \begin{enumerate}
        \item If $w_i>0$, then $\ax(w_i-1,\ulcorner\phi_i\urcorner)$ is true.
        \item If $w_i=0$, then $\phi_i$ can be derived from $\{\phi_j\}_{j<i}$ by one of the rules of Natural deduction system for intuitionistic first-order logic.
    \end{enumerate}
    \item $y=\ulcorner\phi_n\urcorner$.
\end{enumerate}
 The $\Sigma_1$ formula $\pr_\TT(y)$ is the abbreviation for $\ex x\prof(x,y)$. So consistency of $\TT$, $\con(\TT)$, is $\neg\pr_\TT(\ulcorner\bot\urcorner)$. The following theorem states the useful facts about $\pr_\TT$.

\begin{theorem}
For every $\TT\in\II$ the following statements are true:
\begin{enumerate}\label{pr}
    \item For every $\L$ sentence $\phi$, if $\TT\vdash_i\phi$, then $\PRA\vdash_c\pr_\TT(\ulcorner\phi\urcorner)$.
    \item $\PRA\vdash_c \all x,y(\pr_\TT(x)\land \pr_\TT(x\to y)\to\pr_\TT(y))$.
    \item $\PRA\vdash_c \all x,y(\pr_\TT(x)\land\pr_\TT(y)\to\pr_\TT(x\land y))$.
    \item For every $\L$ formula $\phi(x)$ with $x$ as the only free variable, $\PRA\vdash_c\pr_\TT(\ulcorner\all x\phi(x)\urcorner)\to\all x\pr_\TT(\ulcorner\phi(\dot{x})\urcorner)$.
    \item For every $\Sigma_1$ formula $\phi(\vec{x})$, $\PRA\vdash_c \all \vec{x}(\phi(\vec{x})\to\pr_\TT(\ulcorner\phi(\dot{x_1},...,\dot{x_n})\urcorner))$.
\end{enumerate}
\end{theorem}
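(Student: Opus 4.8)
These five clauses are the standard Hilbert--Bernays derivability conditions for $\pr_\TT$ (clauses 1--2), a variant of them and a formalized $\forall$-elimination (clauses 3--4), and provable $\Sigma_1$-completeness (clause 5); the plan is to establish them by the usual arithmetization of metamathematics in the style of Feferman, the only clause with real content being clause 5. The common engine is that $\PRA$ proves every true closed atomic $\L$-sentence: since $\L$ has a function symbol for each primitive recursive function and $\PRA$ contains their defining equations, every closed term provably evaluates to its numeral and every closed equation is decided correctly. As $\prof_\TT$ is by construction a primitive recursive predicate, it is represented by a closed atomic formula on numeral arguments, so whenever $\prof_\TT(\bar{m},\bar{k})$ is true we have $\PRA\vdash_c\prof_\TT(\bar{m},\bar{k})$. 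Clause 1 is then immediate: if $\TT\vdash_i\phi$ there is a concrete witnessed derivation of $\phi$ in the sense of the definition of $\prof_\TT$ --- a sequence of formulas in which every axiom-entry $\phi_i$ is accompanied by an actual number $w_i-1$ with $\ax(w_i-1,\ulcorner\phi_i\urcorner)$ true, which exists since $\ax$ enumerates the recursively enumerable axiom set --- so coding it by a numeral $\bar{m}$ makes $\prof_\TT(\bar{m},\ulcorner\phi\urcorner)$ a true closed atomic sentence, and $\exists$-introduction gives $\PRA\vdash_c\pr_\TT(\ulcorner\phi\urcorner)$.

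For clauses 2--4 I would introduce primitive recursive operations $\mathrm{mp}(x,y)$, $\mathrm{conj}(x,y)$ and $\mathrm{inst}(x,n)$ which, from proof codes of $\phi$ and $\phi\to\psi$ (respectively of $\phi$ and $\psi$, respectively of $\forall x\,\phi(x)$), return the code of the sequence obtained by concatenating the two given sequences --- carrying along their axiom-witnesses $w_i$ unchanged --- and appending one further entry justified by $\to$-elimination, by $\wedge$-introduction, or by $\forall$-elimination instantiating with the numeral of value $n$. That the output again satisfies $\prof_\TT$ with the correct conclusion is a primitive recursive verification about these explicit operations, provable in $\PRA$ by quantifier-free induction on the lengths of the sequences involved; universally quantifying over $x$, $y$ and, for clause 4, over $n$ yields the three stated implications, clause 4 additionally using that $\ulcorner\phi(\dot{x})\urcorner$ is the primitive recursive numeral-substitution function, whose defining properties $\PRA$ knows.

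Clause 5 is the substantive one, and I would prove it by induction on the build-up of the $\Sigma_1$ formula $\phi(\vec{x})$, which we may take of the form $\exists\vec{y}\,\psi(\vec{x},\vec{y})$ with $\psi\in\Delta_0$ (in this rich language one may even take $\psi$ atomic by absorbing bounded quantifiers into a primitive recursive function, which I would do to shorten the argument). The base case $\PRA\vdash_c\forall\vec{x}\,(\psi(\vec{x})\to\pr_\TT(\ulcorner\psi(\dot{\vec{x}})\urcorner))$ for atomic $\psi$ uses $i\PRA\subseteq\TT$: $\TT$ contains the defining equations and proves $\bar{a}\neq\bar{b}$ for $a\neq b$, so, evaluating closed terms, it decides every closed atomic sentence correctly, and formalizing this term-evaluation one obtains inside $\PRA$ a primitive recursive function producing, from any $\vec{x}$ with $\psi(\vec{x})$ true, a $\prof_\TT$-code of $\psi(\dot{\vec{x}})$. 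If the $\Delta_0$ matrix is retained, one lifts this through $\wedge$, $\vee$, negations of atomics and implications between atomics via the $\mathrm{mp}$ and $\mathrm{conj}$ combinators together with decidability of atomics, and through $\exists z\leq t$ and $\forall z\leq t$ by quantifier-free induction in $\PRA$ up to the value of $t$, assembling the finitely many instance-proofs into one. Finally, for the outer quantifier: given $\vec{x}$ with $\phi(\vec{x})$ true, choose a witness $\vec{n}$; the $\Delta_0$ step gives $\pr_\TT(\ulcorner\psi(\dot{\vec{x}},\dot{\vec{n}})\urcorner)$, and composing this, via clauses 1 and 2, with the fact $\TT\vdash_i\psi(\dot{\vec{x}},\dot{\vec{n}})\to\exists\vec{y}\,\psi(\dot{\vec{x}},\vec{y})$ yields $\pr_\TT(\ulcorner\phi(\dot{\vec{x}})\urcorner)$, uniformly in $\vec{x}$. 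I expect the main obstacle to be precisely the syntactic bookkeeping concentrated in this clause --- presenting term-evaluation, numeral-substitution and proof-concatenation as genuinely primitive recursive operations whose correctness $\PRA$ certifies with only quantifier-free induction, and handling the bounded-quantifier step with care; the recursive enumerability of the axiom set contributes only the minor chore of threading the axiom-membership witnesses $w_i$ unchanged through every combinator.
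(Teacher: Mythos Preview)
Your proposal is correct and follows the standard arithmetization-of-metamathematics approach; the paper itself does not give any argument for this theorem but simply refers the reader to Smory\'nski's \emph{Self-Reference and Modal Logic}, which develops precisely the material you sketch. In effect you have written out in detail what the paper leaves to citation.
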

\begin{proof}
See \cite{sm1} for a detailed discussion of these statements.
\end{proof}
\subsection{Realizability}
For proving the first model construction theorem, we need some definitions and theorems about Kleene's realizability.  
\begin{definition}
Let $T(x,y,z)$ be the primitive recursive function called Kleene's T-predicate and $U(x)$ be the primitive recursive function called result-extracting function. Note that $$\HA\vdash_i \all x,y,z,z'( T(x,y,z)=0\land T(x,y,z')=0\to z=z').$$ We use $T(x,y,z)$ instead of $T(x,y,z)=0$ for simplicity. For more information, see section 7 of the third chapter of \cite{tv}.
\end{definition}
Let $\jl(x)$ and $\jr(x)$ be the primitive recursive projections of the pairing function $j(x,y)=2^x\cdot(2y+1)\dotminus 1$. Kleene's realizability is defined as follows.
\begin{definition}
$x\r \phi$ ($x$ realizes $\phi$) is defined by induction on the complexity of $\phi$ where $x\not\in FV(\phi)$.
\begin{enumerate}
\item $x\r p:=p$ for atomic $p$,
\item $x\r(\psi\land \eta):=\jl(x)\r \psi \land \jr(x)\r \eta$,
\item $x\r(\psi\lor \eta):=(\jl(x)=0 \land \jr(x)\r \psi)\lor(\jl(x)\not=0\land \jr(x)\r \eta)$,
\item $x\r(\psi\to \eta):= \all y(y\r \psi\to \ex u(T(x,y,u)\land U(u)\r \eta)$, $u\not\in FV(\eta)$,
\item $x\r\ex y\psi(y):=\jr(x)\r \psi(\jl(x))$,
\item $x\r \all y\psi(y):=\all y\ex u(T(x,y,u)\land U(u)\r \psi(y))$, $u\not\in FV(\psi)$.
\end{enumerate}
\end{definition}
\begin{definition}
A formula $\phi\in\L$ is almost negative iff $\phi$ does not contain $\lor$, and $\ex$ only immediately in front of atomic formulas.
\end{definition}
\begin{definition}\label{d6}
The extended Church's thesis is the following schema, where $\phi$ is almost negative:
$$\ect:= \all\vec{v}\left(\all x(\phi(x,\vec{v})\to\ex y \psi(x,y,\vec{v}))\to\ex z \all x(\phi(x,\vec{v})\to \ex u(T(z,x,u) \land \psi(x,U(u),\vec{v})))\right).$$
\end{definition}
Next theorem explains the relationships between, $\HA$, $\ect$ and Kleene's realizability.
\begin{theorem}
\label{t1}
For every formula $\phi\in\L$:
\begin{enumerate}
\item $\HA+\ect\vdash_i \phi \lr \ex x (x\r \phi)$,
\item $\HA+\ect\vdash_i \phi \LR \HA\vdash_i \ex x(x\r \phi)$.
\end{enumerate}
\end{theorem}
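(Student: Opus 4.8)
The plan is to derive both clauses from the standard theory of Kleene realizability over $\HA$, following Troelstra (see \cite{tv}); the argument factors through one syntactic lemma and one soundness theorem. The lemma is that \emph{realizability predicates are almost negative up to provable equivalence}: for every $\phi\in\L$ there is an almost negative formula $\phi^{\ast}(x)$ with $FV(\phi^{\ast})=FV(\phi)\cup\{x\}$ such that $\HA\vdash_i(x\r\phi)\lr\phi^{\ast}(x)$. I would prove this by induction on $\phi$. The atomic, $\land$ and $\ex$ clauses are immediate, since substituting the terms $\jl(x),\jr(x)$ into an almost negative formula keeps it almost negative. For $\lor$, the guarded disjunction $(\jl(x)=0\land\jr(x)\r\psi)\lor(\jl(x)\neq0\land\jr(x)\r\eta)$ is provably equivalent in $\HA$ to $(\jl(x)=0\to\jr(x)\r\psi)\land(\jl(x)\neq0\to\jr(x)\r\eta)$ because $\jl(x)=0$ is decidable. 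For $\to$ and $\all$ the realizability clause contains a subformula $\ex u(T(x,y,u)\land\theta(U(u)))$, which the $\HA$-provable single-valuedness of $T$ lets one rewrite as $\ex u\,T(x,y,u)\land\all u(T(x,y,u)\to\theta(U(u)))$ --- almost negative whenever $\theta$ is.

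For the first statement I would work inside $\HA+\ect$ and induct on $\phi$. For atomic $\phi$, $\phi\lr\ex x\,\phi$ since $x\notin FV(\phi)$. For $\land,\lor,\ex$ one unfolds $\ex x(x\r\phi)$, moves the pairing functions $\jl,\jr$ around by pure intuitionistic logic to reduce it to the corresponding Boolean/existential combination of the formulas $\ex y(y\r\cdot)$ for the immediate subformulas, and applies the induction hypothesis. The only cases that use $\ect$ are $\to$ and $\all$. For $\phi=\psi\to\eta$: the implication $\ex x(x\r\phi)\to\phi$ uses only the induction hypothesis --- assuming $\psi$ one obtains some $y$ with $y\r\psi$, feeds it to the realizer of $\phi$ to get some $z$ with $z\r\eta$, hence $\eta$; for the converse, from $\psi\to\eta$ and the induction hypothesis one derives $\all y((y\r\psi)\to\ex z(z\r\eta))$, and since $(y\r\psi)$ is provably equivalent to the almost negative $\psi^{\ast}(y)$ by the lemma, the matching instance of $\ect$ yields $\ex x\,\all y((y\r\psi)\to\ex u(T(x,y,u)\land U(u)\r\eta))$, i.e. $\ex x(x\r(\psi\to\eta))$. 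The case $\phi=\all y\,\psi(y)$ is identical, applying $\ect$ to $\all y((0=0)\to\ex z(z\r\psi(y)))$.

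For the second statement, the direction $\HA\vdash_i\ex x(x\r\phi)\Rightarrow\HA+\ect\vdash_i\phi$ is immediate from the first statement. The direction $\HA+\ect\vdash_i\phi\Rightarrow\HA\vdash_i\ex x(x\r\phi)$ is the soundness of realizability over $\HA+\ect$, proved by induction on a natural deduction derivation of $\phi$: to each line one assigns a term (in the free variables of that line) which $\HA$ proves realizes the line. The logical rules and the nonlogical axioms of $\HA$ are realized by the familiar closed terms; the substantive point is realizing the instances of $\ect$, for which one uses that every almost negative $\phi(\vec v)$ is \emph{effectively} self-realizing: there is an index $e_{\phi}$ with $\HA\vdash_i\all\vec v(\phi(\vec v)\to\ex u(T(e_{\phi},\langle\vec v\rangle,u)\land U(u)\r\phi(\vec v)))$, the $\ex$-before-atomic clause being handled by an unbounded least-witness search that $\HA$ proves to terminate precisely because $\phi(\vec v)$ holds. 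Given this, a realizer of an $\ect$-instance, when fed a realizer of $\all x(\phi(x,\vec v)\to\ex y\psi(x,y,\vec v))$, runs it on each $x$, applies the result to $e_{\phi}$ evaluated at $\langle x,\vec v\rangle$, and returns the witness component.

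I expect the real obstacle to be the two places where almost-negativity is decisive: in the first statement, that $x\r\phi$ is equivalent to an almost negative formula, which is exactly what lets $\ect$ be applied in the $\to$ and $\all$ cases; and in the second statement, the effective self-realizability of almost negative formulas, which is what makes the $\ect$ axiom itself realizable. Once these are in place the remaining work is routine manipulation of the $T$-predicate and the pairing functions, so in the write-up I would carry out the induction for the first statement in full and cite \cite{tv} for the soundness theorem.
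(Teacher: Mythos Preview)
Your proposal is correct and is precisely the standard argument from Troelstra--van Dalen that the paper invokes: the paper does not give its own proof but simply cites Theorem~4.10 of Chapter~4 of \cite{tv}, and your sketch (almost-negativity of $x\r\phi$, the induction on $\phi$ using $\ect$ at the $\to$ and $\all$ cases, and soundness via effective self-realizability of almost negative formulas) is exactly that proof. There is nothing to add or correct.
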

\begin{proof}
See Theorem 4.10 in the fourth chapter of \cite{tv}.
\end{proof}

Another important properties of $\HA$ are the existence and disjunction properties. We will use notation $\bar{n}$ as the syntactic term corresponds to natural number $n$.
\begin{theorem}\label{dis}
The following statements are true:
\begin{enumerate}
    \item Disjunction property: For every sentences $\phi,\psi\in\L$, if $\HA\vdash_i \phi \lor \psi$, then $\HA\vdash_i \phi$ or $\HA\vdash_i \psi$,
    \item Existence property: For every sentence $\ex x\phi(x)\in\L$, if $\HA\vdash_i \ex x\phi(x)$, then there exists a natural number $n$ such that $\HA\vdash_i \phi(\bar{n})$.
\end{enumerate}
\end{theorem}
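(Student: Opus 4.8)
The plan is to derive both properties from a soundness theorem for \emph{realizability with truth} (Kleene's $q$-realizability), a tiny variant of the relation $x\r\phi$ already defined. Plain realizability does not by itself give these properties: extracting a numeral witness from an $\HA$-provable existential is precisely what is at stake, and feeding $\HA\vdash_i\ex x\phi(x)$ through $\r$ and Theorem \ref{t1} only produces another $\HA$-provable existential. The standard remedy is to build the truth of the realized formula into the relation. Concretely, define $x\,{\bf q}\,\phi$ by exactly the clauses defining $x\r\phi$, except that the implication clause is strengthened to
$$x\,{\bf q}\,(\psi\ra\eta)\ :=\ (\psi\ra\eta)\ \land\ \all y\bigl(y\,{\bf q}\,\psi\ra\ex u(T(x,y,u)\land U(u)\,{\bf q}\,\eta)\bigr).$$
A routine induction on $\phi$ then shows $\HA\vdash_i (x\,{\bf q}\,\phi)\ra\phi$ for every $\phi$: all clauses other than $\ra$ propagate truth from their immediate subformulas, and the new $\ra$ clause contains $\psi\ra\eta$ outright. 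So $q$-realizability implies truth.

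Second, I would prove the soundness theorem: for every sentence $\phi$, if $\HA\vdash_i\phi$ then there is a closed $\L$-term $t$ with $\HA\vdash_i\un{t}\,{\bf q}\,\phi$; since $\HA$ proves every closed $\L$-term equal to its numeral value, we may take $t=\n$ for an explicitly computable $n$. The proof is an induction on the derivation (in the usual hypothesis/free-variable form), following the pattern used for Theorem \ref{t1}: the axioms of $\sf Q$ and the defining axioms of the primitive recursive functions are true universal-atomic statements, realized trivially; the logical axioms and rules are handled as for $\r$, using $T$, $U$ and the $s$-$m$-$n$ theorem together with the recursion theorem to assemble the required indices, the only extra bookkeeping being to carry along the new truth conjunct — which is always available, either because it is the conclusion of the rule in question at the level of provability (as for $\ra$-introduction and modus ponens) or because it follows from ``$q$-realizability implies truth'' applied to the sub-realizers.

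Third, once soundness is in hand the two properties fall out by evaluating closed terms in the metatheory. If $\HA\vdash_i\ex x\,\phi(x)$, choose $n$ with $\HA\vdash_i\n\,{\bf q}\,\ex x\phi(x)$; unfolding the existential clause gives $\HA\vdash_i\jr(\n)\,{\bf q}\,\phi(\jl(\n))$, whence $\HA\vdash_i\phi(\jl(\n))$ by the truth lemma, and computing $m:=\jl(n)$ together with $\HA\vdash_i\jl(\n)=\bar m$ yields $\HA\vdash_i\phi(\bar m)$, which is the existence property. For the disjunction property one can argue the same way from $\HA\vdash_i\n\,{\bf q}\,(\phi\lor\psi)$, splitting on the computed value of $\jl(n)$; equivalently, observe $\HA\vdash_i(\phi\lor\psi)\lr\ex x\bigl((x=0\ra\phi)\land(x\neq 0\ra\psi)\bigr)$ using decidability of equality, so the disjunction property reduces to the existence property just proved, the value of the extracted witness deciding which disjunct $\HA$ proves.

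The main obstacle is the induction-axiom case of the soundness theorem. Given a $q$-realizer $c$ of $\phi(\bar{0})\land\all x(\phi(x)\ra\phi(Sx))$, one must produce an index $e$ — computable primitive recursively, and uniformly in the parameters of $\phi$ — for the partial recursive function $f$ determined by $f(0)=\jl(c)$ and $f(x+1)=\{U(\{\jr(c)\}(x))\}(f(x))$, and then verify \emph{inside} $\HA$, by an induction on $x$ that is legitimate since $\HA$ has full induction, that $f$ is total and $\all x\,(f(x)\,{\bf q}\,\phi(x))$, so that $\bar e\,{\bf q}\,\all x\phi(x)$; the truth conjunct required of the realizer of the induction instance is just the induction axiom itself. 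This is essentially the computation behind Theorem \ref{t1}, the $q$-modification only adding the need to track the extra conjuncts, and the delicate point is keeping every term closed and uniform in the parameters so that a closed instance $\all\vec{y}\,{\bf I}_\phi$ receives a closed realizer. I would remark that an equivalent route runs via Kleene's slash (or Aczel's slash), that the theorem is originally due to Kleene, and that a complete treatment is in \cite{tv}.
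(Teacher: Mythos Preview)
Your proposal is correct: $q$-realizability (realizability with truth) is one of the standard routes to the disjunction and existence properties for $\HA$, and your sketch of the soundness argument, including the handling of the induction axiom via the recursion theorem and an internal induction on $x$, is accurate. The paper itself gives no argument at all here --- it simply defers to Theorem~5.10 of Chapter~3 of \cite{tv} --- so there is nothing to compare at the level of technique; your write-up is effectively a reconstruction of one of the proofs that reference contains, and you rightly note that Kleene's or Aczel's slash would serve equally well.
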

\begin{proof}
See Theorem 5.10 of the third chapter of \cite{tv}.
\end{proof}

Although $\HA$ is an intuitionistic theory, it can prove decidability of some restricted class of formulas. The next theorem explains this fact.
\begin{theorem}\label{dec}
For every quantifier free formula $\phi\in\L$, $\HA\vdash_i \phi \lor \neg \phi$.
\end{theorem}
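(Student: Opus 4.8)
The plan is to prove the claim by induction on the logical structure of $\phi$, with the real work concentrated in the atomic case. First I would establish the key lemma $\HA\vdash_i\all x\all y(x=y\lor x\neq y)$. This is proved by a nested induction: an outer induction on $x$, and at each stage an inner induction on $y$. For the base case one shows $\all y(0=y\lor 0\neq y)$ by induction on $y$, using $0=0$ and the $\sf Q$-axiom $Sy\neq 0$. For the inductive step, assuming $\all y(x=y\lor x\neq y)$, one proves $\all y(Sx=y\lor Sx\neq y)$ by induction on $y$: the case $y=0$ again uses $Sx\neq 0$, while the case $Sy$ uses the outer induction hypothesis on $x$ applied to that $y$, together with the $\sf Q$-axiom $Sx=Sy\to x=y$ and its converse, the congruence of $S$. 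Since $\HA$ contains the induction schema ${\bf I}_\phi$ for every $\L$-formula $\phi$, these nested inductions are legitimate. Instantiating $x,y$ with arbitrary $\L$-terms $t_1,t_2$ then gives $\HA\vdash_i t_1=t_2\lor t_1\neq t_2$, i.e.\ every atomic formula of $\L$ is decidable over $\HA$.

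For the induction on formula complexity I would use the standard fact that the class of formulas decidable over $\HA$ is closed under all propositional connectives, by purely intuitionistic propositional reasoning. Concretely, if $\HA\vdash_i\psi\lor\neg\psi$ and $\HA\vdash_i\eta\lor\neg\eta$, then a four-way case distinction yields $\HA\vdash_i(\psi\land\eta)\lor\neg(\psi\land\eta)$, $\HA\vdash_i(\psi\lor\eta)\lor\neg(\psi\lor\eta)$ and $\HA\vdash_i(\psi\to\eta)\lor\neg(\psi\to\eta)$: whenever the relevant disjuncts hold positively the compound is provable, and otherwise one of $\neg\psi,\neg\eta$ holds and the negation of the compound follows intuitionistically (using ex falso in the implication case). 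Negation is the special case $\eta=\bot$ of implication. Since every quantifier-free $\phi\in\L$ is built from atomic formulas by $\land,\lor,\to,\neg$, combining the atomic case above with these closure properties in an induction on $\phi$ gives $\HA\vdash_i\phi\lor\neg\phi$.

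I do not expect any serious obstacle; the argument is routine. The only point needing care is the nested induction in the atomic case --- in particular, carrying out each inner induction on the correct variable and invoking precisely the $\sf Q$-axioms $Sy\neq 0$ and $Sx=Sy\to x=y$ together with the equality axioms for $S$. If one prefers to treat $\leq$ as a primitive atomic relation rather than the quantifier-free abbreviation $t_1\dotminus t_2=0$, one would additionally remark that its defining equations reduce its decidability to that of equality, which the lemma already supplies.
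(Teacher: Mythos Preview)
Your argument is correct and is the standard proof of this fact. The paper does not give an explicit proof but simply refers the reader to \cite{tv}, where essentially this argument appears.
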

\begin{proof}
See \cite{tv}.
\end{proof}
\subsection{Kripke models}
A Kripke model for a language $\sigma$ is a triple $\K=(K,\leq,\M)$ such that:
\begin{enumerate}
    \item $(K,\leq)$ is a nonempty partial order.
    \item For every $k\in K$, $\M_k\in\M$ is a classical structure in the language $\sigma({\M_k})=\sigma\cup\{\un{c}|c\in\M_k\}$.
    \item For every $k,k'\in K$, if $k\leq k'$, then $\sigma({\M_k})\subseteq \sigma({\M_{k'}})$ and also $\M_{k'}\models \diag^+(\M_k)$ ($\M_k$ is a sub-structure of $\M_{k'}$).
\end{enumerate}

For every Kripke model $\K$, there is a uniquely inductively defined relation $\Vdash \subseteq K \times \left(\bigcup_{k\in K}\sigma({\M_k})\right)$ that is called forcing.
\begin{definition}
For every $k\in K$, and every sentence $\phi\in\sigma({\M_k})$, the relation $k\Vdash \phi$ is defined by induction on complexity of $\phi$:
\begin{enumerate}
    \item $k\Vdash p$ iff $\M_k\models p$, for atomic $p$,
    \item $k\Vdash \psi \land \eta$ iff $k\Vdash \psi$ and $k\Vdash \eta$,
    \item $k\Vdash \psi \lor \eta$ iff $k\Vdash \psi$ or $k\Vdash \eta$,
    \item $k\Vdash \neg\psi$ iff for no $k'\geq k$, $k'\Vdash \psi$,
    \item $k\Vdash \psi\to\eta$ iff for every $k'\geq k$, if $k'\Vdash \psi$, then $k'\Vdash \eta$,
    \item $k\Vdash \ex x \psi(x)$ iff there exists $\un{c}\in\sigma_{\M_k}$ such that $k\Vdash \psi(\un{c})$,
    \item $k\Vdash \all x \psi(x)$ iff for every $k'\geq k$ and every $\un{c}\in\sigma({\M_{k'}})$, $k'\Vdash \psi(\un{c})$.
\end{enumerate}
\end{definition}

We use the notation $\K\Vdash \phi$ ($\phi\in\bigcap_{k\in K}\sigma({\M_k})$ is a sentence) as an abbreviation that for every $k\in K$, $k\Vdash \phi$ which simply means that the Kripke model $\K$ forces $\phi$. The important property of the forcing relation is its monotonicity. This means that for every $k'\geq k$ and every $\phi\in\sigma({\M_k})$, if $k\Vdash \phi$, then $k'\Vdash \phi$. Also, note that first-order intuitionistic logic is sound and is strongly complete with respect to the Kripke models. For more details see \cite{tv}.
\section{Kripke model constructions for intuitionistic arithmetical theories}
\subsection{The first model construction}
 We will explain the first model construction in this subsection. This construction will be presented in a sequence of lemmas and theorems.
\begin{lemma}
\label{l2}
For every quantifier-free formula $\phi\in\L$ there exists an atomic formula $p\in\L$ with the same free variables such that $\HA\vdash_i \phi\lr p$.
\end{lemma}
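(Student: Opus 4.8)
The plan is to prove, by induction on the structure of $\phi$, the slightly stronger statement that for every quantifier-free $\phi\in\L$ there is an $\L$-term $t$ with $FV(t)=FV(\phi)$ such that $\HA\vdash_i\phi\lr t=0$; since $t=0$ is an atomic $\L$-formula with the same free variables, this yields the lemma. The tools I would use are just the defining (recursion) equations for $+$, $\cdot$, truncated subtraction $\dotminus$ and the cosignum $\overline{\mathrm{sg}}$ (with $\overline{\mathrm{sg}}(0)=S0$ and $\overline{\mathrm{sg}}(Sx)=0$), a few elementary identities they satisfy that are provable in $\HA$, and—crucially—Theorem \ref{dec}, the decidability of quantifier-free formulas in $\HA$, which is what legitimizes the otherwise classical-looking case splits below.

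For the base case, an atomic formula of $\L$ has the form $s=u$ for terms $s,u$; I would take $t:=(s\dotminus u)+(u\dotminus s)$, so that $FV(t)=FV(s)\cup FV(u)=FV(s=u)$. The equivalence $\HA\vdash_i s=u\lr t=0$ then follows from the routine, $\HA$-provable facts $\all x,y\,(x+y=0\lr x=0\land y=0)$ and $\all x,y\,(x\dotminus y=0\land y\dotminus x=0\to x=y)$.

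For the inductive step, assume $\HA\vdash_i\phi\lr a=0$ and $\HA\vdash_i\psi\lr b=0$ with $FV(a)=FV(\phi)$ and $FV(b)=FV(\psi)$. I would choose: $a+b$ for $\phi\land\psi$ (using $x+y=0\lr x=0\land y=0$); $a\cdot b$ for $\phi\lor\psi$ (using $\HA\vdash_i\all x,y\,(x\cdot y=0\lr x=0\lor y=0)$, whose left-to-right direction invokes Theorem \ref{dec} to split on whether $x$, and then $y$, is $0$ or a successor); $\overline{\mathrm{sg}}(a)$ for $\neg\phi$ (using $\HA\vdash_i\all x\,(\neg(x=0)\lr\overline{\mathrm{sg}}(x)=0)$, again via a zero/successor case split); and $\overline{\mathrm{sg}}(a)\cdot b$ for $\phi\to\psi$, because decidability of $\phi$ gives $\HA\vdash_i(\phi\to\psi)\lr(\neg\phi\lor\psi)$, which by the two previous cases is equivalent to $\overline{\mathrm{sg}}(a)=0\lor b=0$ and hence to $\overline{\mathrm{sg}}(a)\cdot b=0$. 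A direct check shows the displayed term has exactly the free variables of the compound formula in each case, so the invariant $FV(t)=FV(\phi)$ is preserved.

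I expect no conceptual obstacle here: the content is entirely in verifying the short list of elementary $\HA$-identities about $+,\cdot,\dotminus,\overline{\mathrm{sg}}$ and in checking that the equivalences for $\lor$, $\neg$ and $\to$—which fail intuitionistically for undecidable formulas—are salvaged by Theorem \ref{dec}. The main point to be careful about is therefore simply that every inference stays within intuitionistic logic.
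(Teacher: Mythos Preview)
Your proof is correct and follows exactly the approach the paper indicates: the paper's own proof is the single line ``By induction on the complexity of $\phi$ and using Theorem \ref{dec},'' and you have supplied precisely those details, building the characteristic term recursively and invoking decidability for the $\lor$, $\neg$, and $\to$ cases. The only (trivial) omission is the logical constant $\bot$, handled by $t:=S0$.
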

\begin{proof}
By induction on the complexity of $\phi$ and using Theorem \ref{dec}.
\end{proof}
\begin{lemma}
\label{l3}
Let $\left<.\right>$ and $(.)_x$ be a primitive recursive coding and decoding functions, then for every formula $Qx_1,...,x_n \phi(\x,\y)\in\L$ where $Q\in\{\all,\ex\}$ and $n>0$, $$\HA\vdash_i Qx_1,...,x_n \phi(\x,\y)\lr Qx \phi((x)_0,...,(x)_n,\y).$$
\end{lemma}
\begin{proof}
Straightforward by properties of the coding and decoding functions.
\end{proof}

We will use notation $\phi([x],\y)$ instead of $\phi((x)_0,...,(x)_n,\y)$ for simplicity.
\begin{theorem}
\label{t4}
For every $\Pi_2$ sentence $\phi:=\all \x\ex \y \psi(\x,\y)$, if $\HA+\ect\vdash_i \phi$, then $\PA\vdash_c \phi$.
\end{theorem}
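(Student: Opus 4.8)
The plan is to unwind the hypothesis $\HA+\ect\vdash_i\phi$ via Kleene's realizability (Theorem \ref{t1}) and then read off a classical proof of $\phi$ by a purely logical manipulation, so that $\ect$ is really used only through clause (2) of Theorem \ref{t1}.

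First I would normalise $\phi$. Write $\phi=\all\x\ex\y\,\psi(\x,\y)$ with $\psi$ quantifier-free. By Lemma \ref{l2} there is an atomic formula $p$ with $\HA\vdash_i\psi\lr p$, and by two applications of Lemma \ref{l3} the blocks $\all\x$ and $\ex\y$ can be contracted to single quantifiers. Thus $\HA\vdash_i\phi\lr\phi'$, where $\phi'=\all x'\ex y\,p'(x',y)$ and $p'$ is again atomic (it is $p$ precomposed with the primitive recursive decoding terms $(x')_i,(y)_j$). Since $\HA$ and $\PA$ have the same non-logical axioms, a proof of any sentence in $\HA$ is in particular a classical proof of it in $\PA$; hence $\HA\vdash_i\phi\lr\phi'$ gives $\PA\vdash_c\phi\lr\phi'$, and it suffices to show $\PA\vdash_c\phi'$.

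Next I would compute $x\r\phi'$ from the definition. Since $p'$ is atomic, $v\r\ex y\,p'(x',y)$ is $\jr(v)\r p'(x',\jl(v))$, i.e. simply $p'(x',\jl(v))$; feeding this into the clause for $\all$ shows that $x\r\phi'$ is the formula $\all x'\ex u\big(T(x,x',u)\land p'(x',\jl(U(u)))\big)$. Now by Theorem \ref{t1}(2), from $\HA+\ect\vdash_i\phi'$ we obtain $\HA\vdash_i\ex x\,(x\r\phi')$, that is, $\HA\vdash_i\ex x\,\all x'\ex u\big(T(x,x',u)\land p'(x',\jl(U(u)))\big)$. (Optionally, the Existence Property of Theorem \ref{dis} even produces a numeral $\bar{n}$ with $\HA\vdash_i\all x'\ex u\big(T(\bar{n},x',u)\land p'(x',\jl(U(u)))\big)$, exhibiting an explicit recursive witnessing function, though this is not needed.) Passing to classical logic as above, $\PA\vdash_c\ex x\,\all x'\ex u\big(T(x,x',u)\land p'(x',\jl(U(u)))\big)$; and this sentence classically implies $\all x'\ex y\,p'(x',y)$ --- given the witness $x$ and any $x'$, choose $u$ with $T(x,x',u)\land p'(x',\jl(U(u)))$ and put $y:=\jl(U(u))$. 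Hence $\PA\vdash_c\phi'$, and therefore $\PA\vdash_c\phi$.

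There is essentially no hard step here; the points needing care are the correct bookkeeping of the realizability clauses when forming $x\r\phi'$, and the observation that the normalisations of the first paragraph are harmless precisely because $\HA$ proves the relevant equivalences while $\HA$-theorems are $\PA$-theorems. Conceptually, the whole argument rests on two facts: (i) $\HA+\ect$ proves a $\Pi_2$ sentence only if $\HA$ proves the existence of a realizer for it, and (ii) a realizer of a $\Pi_2$ sentence classically witnesses that sentence.
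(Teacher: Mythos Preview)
Your proof is correct and follows essentially the same route as the paper: normalise via Lemmas \ref{l2} and \ref{l3}, apply Theorem \ref{t1}(2) to obtain an $\HA$-proof of the existence of a realizer, unwind the realizability clauses for $\all x'\ex y\,p'$, and read off $\phi'$ in $\PA$. The only cosmetic difference is that the paper \emph{does} invoke the existence property (your ``optional'' step) to fix a numeral $\bar n$ and then carries out the witnessing argument already in $\HA$, whereas you keep the outer $\ex x$ and perform the final implication classically in $\PA$; both are fine.
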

\begin{proof}
Let $\phi$ be a $\Pi_2$ sentence and $\HA+\ect\vdash_i \phi$. By Lemmas \ref{l3} and \ref{l2} there exists an atomic formula $p(x,y)$ such that $\HA \vdash_i \phi\lr \all x\ex y p(x,y)$ and therefore $\HA+\ect \vdash_i \all x\ex yp(x,y)$. By Theorem \ref{t1} $\HA\vdash_i \ex n (n\r \all x\ex y p(x,y))$. Because $ \ex n (n\r \all x\ex y p(x,y))$ is a sentence, there exists a natural number $n$ such that $ \HA\vdash_i\bar{n}\r \all x\ex y p(x,y)$. Therefore by definition of the realizability:
\begin{enumerate}
\item $\Ra\HA\vdash_i \all x \ex u(T(\bar{n},x,u)\land U(u)\r \ex yp(x,y)),$
\item $\Ra\HA\vdash_i \all x \ex u(T(\bar{n},x,u)\land \jr(U(u))\r p(x,\jl(U(u)))),$
\item $\Ra\HA\vdash_i \all x \ex u(T(\bar{n},x,u)\land p(x,\jl(U(u)))),$
\item $\Ra\HA\vdash_i \all x \ex up(x,u),$
\end{enumerate}
hence $\PA\vdash_c \phi$.
\end{proof}

In the rest of the paper, for every $\L$ structure $\M$, $$\T:=\HA+\ect+\H.$$
\begin{theorem}
\label{t5}
If $\M\models \th_{\Pi_1}(\PA)$, then $\T$ is consistent.
\end{theorem}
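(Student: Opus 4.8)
The plan is to argue by contradiction, using that derivations are finite together with Theorem \ref{t4}. Suppose $\T=\HA+\ect+\H$ is inconsistent. Since an intuitionistic derivation of $\bot$ invokes only finitely many axioms, there is a finite set $D\subseteq\H=\diag(\M)$ with $\HA+\ect+D\vdash_i\bot$. Write $\delta(\un{c_1},\dots,\un{c_k})$ for the conjunction of the sentences in $D$, where $\un{c_1},\dots,\un{c_k}$ list the distinct constants naming elements of $\M$ that occur in $D$; then $\delta$ is a quantifier-free $\L$-formula, and by the very definition of the diagram $\M\models\delta(c_1,\dots,c_k)$. From $\HA+\ect\vdash_i\neg\delta(\un{c_1},\dots,\un{c_k})$, and since none of the $\un{c_i}$ occurs in the axioms of $\HA+\ect$, generalization on constants gives $\HA+\ect\vdash_i\all\x\,\neg\delta(\x)$, where $\x=(x_1,\dots,x_k)$.

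The key step is to transfer this downward to $\PA$. Reading $\all\x\,\neg\delta(\x)$ as $\all\x\ex y\,\neg\delta(\x)$ with a dummy variable $y$, it is a $\Pi_2$ sentence whose matrix $\neg\delta$ is quantifier-free, so Theorem \ref{t4} applies and yields $\PA\vdash_c\all\x\,\neg\delta(\x)$. (The direct implication ``every $\HA+\ect$-theorem is a $\PA$-theorem'' is false for arbitrary sentences, since $\PA$ refutes instances of $\ect$; this is precisely why Theorem \ref{t4} — that is, the realizability argument — is needed, and it is available here because $\neg\delta$ has low logical complexity.) Hence $\all\x\,\neg\delta(\x)$ is a $\Pi_1$ sentence belonging to $\th_{\Pi_1}(\PA)$. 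If one prefers, one may first invoke Lemma \ref{l2} to replace $\neg\delta$ by a $\HA$-equivalent atomic formula before applying Theorem \ref{t4}; this is not essential.

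Finally, the hypothesis $\M\models\th_{\Pi_1}(\PA)$ gives $\M\models\all\x\,\neg\delta(\x)$, hence $\M\models\neg\delta(c_1,\dots,c_k)$, contradicting $\M\models\delta(c_1,\dots,c_k)$ obtained in the first paragraph. Therefore $\T$ is consistent.

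I do not expect a genuine obstacle in this argument: its whole force is concentrated in Theorem \ref{t4}, where $\ect$ and Kleene realizability do the real work of pushing a $\Pi_2$ theorem of $\HA+\ect$ down to $\PA$; the remainder is the routine reduction of the consistency of ``theory plus diagram'' to the underivability of the negation of a single quantifier-free formula. The only points demanding a little care are the bookkeeping with the constant-expanded language $\L\cup\{\un c\mid c\in\M\}$ (checking that the $\un{c_i}$ are genuinely fresh, so that generalization on constants is legitimate) and the harmless observation that a $\Pi_1$ sentence may be presented as a $\Pi_2$ sentence when invoking Theorem \ref{t4}.
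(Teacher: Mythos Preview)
Your argument is correct and follows essentially the same route as the paper's own proof: assume inconsistency, extract a finite conjunction $\delta$ from $\diag(\M)$, generalize over the fresh constants to obtain $\HA+\ect\vdash_i\all\x\,\neg\delta(\x)$, invoke Theorem~\ref{t4} to pass to $\PA$, and contradict $\M\models\th_{\Pi_1}(\PA)$. Your explicit remark about inserting a dummy existential quantifier to fit the $\Pi_2$ format of Theorem~\ref{t4} is a point the paper leaves implicit.
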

\begin{proof}
Suppose $\T$ is inconsistent, so there exists a finite number of $\L(\M)$ sentences $\{\phi_i(\vec{\un{c}_i})\}_{i\leq n}\subseteq \H$ such that $\HA+\ect+\bigwedge_{i=1}^n\phi_i(\vec{\un{c}_i})\vdash_i\bot$, therefore $\HA+\ect\vdash_i\neg\bigwedge_{i=1}^n\phi_i(\vec{\un{c}_i})$. Because $\vec{\un{c}_i}$ are not used in the axioms of $\HA+\ect$, we have $\HA+\ect\vdash_i\all \vec{x_1},...,\vec{x_n}(\neg\bigwedge_{i=1}^n\phi_i(\vec{x_i}))$. Note that $\all \vec{x_1},...,\vec{x_n}(\neg\bigwedge_{i=1}^n\phi_i(\vec{x_i}))$ is a $\Pi_1$ sentence and therefore by Theorem \ref{t4}, $\PA\vdash_c \all \vec{x_1},...,\vec{x_n}(\neg\bigwedge_{i=1}^n\phi_i(\vec{x_i}))$. This implies that $\M\models \all \vec{x_1},...,\vec{x_n}(\neg\bigwedge_{i=1}^n\phi_i(\vec{x_i}))$ and especially $\M\models \neg\bigwedge_{i=1}^n\phi_i(\vec{\un{c}_i})$, but by definition of $\diag(\M)$ we know $\M\models \bigwedge_{i=1}^n\phi_i(\vec{\un{c}_i})$ and this leads to a contradiction, hence $\T$ is consistent.
\end{proof}

If an $\L$ structure $\M$ satisfies a strong enough theory of arithmetic, then $\T$ has actually the existence and disjunction properties.
\begin{theorem}
\label{t6}
(Existence and disjunction properties). Suppose $\M$ is a model of $\th_{\Pi_2}(\PA)$, then the following statements are true:
\begin{enumerate}
    \item For every $\L(\M)$ sentence $\ex z \phi(z)$ such that $\T\vdash_i \ex z \phi(z),$ there exists a constant symbol $\un{c}\in\L_\M$ such that $\T\vdash_i \phi(\un{c})$.
    \item For every $\L(\M)$ sentence $\phi\lor \psi$ such that $\T\vdash_i \phi\lor \psi$, $\T\vdash_i \phi$ or $\T\vdash_i \psi$.
\end{enumerate}
\end{theorem}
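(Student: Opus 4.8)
The plan is to reduce the existence and disjunction properties for $\T = \HA+\ect+\H$ to the corresponding properties of $\HA$ itself (Theorem \ref{dis}), using realizability (Theorem \ref{t1}) and the translation from quantifier-free to atomic formulas (Lemma \ref{l2}) to handle the extra axioms coming from $\diag(\M)$. The key observation is that the sentences in $\H$ are, up to $\HA$-provable equivalence, atomic sentences in the expanded language $\L_\M$, and that a finite conjunction of true atomic sentences is realized by (a numeral for) a suitable code; so any proof from $\T$ can be replaced by a proof from $\HA$ with that conjunction added as a hypothesis, which in turn becomes an implication that $\HA$ realizes.

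First I would treat the existence property. Suppose $\T\vdash_i \ex z\,\phi(z)$ for an $\L(\M)$ sentence. Since only finitely many axioms of $\H$ are used, there is a finite conjunction $\chi := \bigwedge_{i\le n}\phi_i(\vec{\un c_i})$ of members of $\H$ with $\HA+\ect+\chi \vdash_i \ex z\,\phi(z)$, hence $\HA+\ect \vdash_i \chi \to \ex z\,\phi(z)$. By Lemma \ref{l2} each $\phi_i$ is $\HA$-equivalent to an atomic formula, so $\chi$ is $\HA$-equivalent to an atomic sentence $p$ of $\L_\M$, and $\HA+\ect\vdash_i p\to\ex z\,\phi(z)$. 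By Theorem \ref{t1}(2), $\HA \vdash_i \ex x\,(x\r(p\to\ex z\,\phi(z)))$, so by the existence property of $\HA$ (Theorem \ref{dis}) there is a numeral $\bar m$ with $\HA\vdash_i \bar m\r(p\to\ex z\,\phi(z))$. Unwinding the realizability clause for $\to$: for all $y$ with $y\r p$ — i.e.\ (using the atomic clause) with $p$ true and $y$ arbitrary — there is $u$ with $T(\bar m,y,u)$ and $U(u)\r \ex z\,\phi(z)$, and the latter decodes to $\jr(U(u))\r\phi(\jl(U(u)))$. Instantiating $y:=\bar 0$ and letting $e$ be the (provably existing, by $\Sigma_1$-completeness of $\HA$, Theorem \ref{pr}(5) applied inside $\HA$) value of the relevant computation, one extracts inside $\HA$ a specific closed term $t$ for $\jl(U(u))$; but a closed $\L$-term denotes some standard numeral $\bar c$, and the point is that $\M$ must realize $p$. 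Here is where $\M\models\th_{\Pi_2}(\PA)$ enters: the sentence "$p\to$ (the computation $\bar m$ on input $0$ halts with first projection equal to some value $v$ such that $\phi$ holds at $v$)" is, after the realizability unwinding, provable in $\HA$ and has a $\Pi_2$-shape once we existentially close; so it is true in $\M$, and since $\M\models p$ (as $p$ abbreviates a conjunction of $\diag(\M)$-sentences), $\M$ furnishes the witness. I would then take $\un c\in\L_\M$ to be the constant naming that witness and verify $\T\vdash_i\phi(\un c)$ by running the same realized computation, now provably, from $\HA+\chi$ and hence from $\T$.

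The disjunction property is entirely analogous: from $\T\vdash_i\phi\lor\psi$ get $\HA+\ect\vdash_i p\to(\phi\lor\psi)$ for an atomic $p$ equivalent to a finite chunk of $\H$, then by Theorem \ref{t1}(2) and the disjunction/existence properties of $\HA$ obtain a numeral $\bar m$ realizing $p\to(\phi\lor\psi)$; the realizability clause for $\to$ followed by that for $\lor$ says that on input $0$ the computation $\bar m$ halts and the parity of $\jl$ of its output decides which disjunct is realized. That parity is a closed term, hence a fixed standard number, so \emph{provably in $\HA$} one of "$p\to\ex x(x\r\phi)$" or "$p\to\ex x(x\r\psi)$" holds; combining with $\T\vdash_i p$ and Theorem \ref{t1}(1) gives $\T\vdash_i\phi$ or $\T\vdash_i\psi$. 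The main obstacle, and the step I would be most careful about, is the extraction in the existence-property argument: I must argue that although the realizer $\bar m$ is only guaranteed to work on inputs that \emph{realize} $p$, the atomic clause makes "$y\r p$" equivalent to "$p$ is true", so it suffices that $\M\models p$ — which holds by definition of $\diag(\M)$ — and then the $\Pi_2$-soundness of $\M$ over $\PA$ (via Theorem \ref{t4}) upgrades the $\HA$-provable implication into an actual witness in $\M$. Getting the quantifier complexity bookkeeping right there, so that the sentence handed to Theorem \ref{t4} is genuinely $\Pi_2$, is the delicate point.
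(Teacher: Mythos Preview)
There is a genuine gap in how you handle the $\M$-constants. After replacing the finite fragment of $\H$ by an atomic $p$, you write ``By Theorem \ref{t1}(2), $\HA\vdash_i\ex x(x\r(p\to\ex z\phi(z)))$, so by the existence property of $\HA$ there is a numeral $\bar m$ with $\HA\vdash_i\bar m\r(p\to\ex z\phi(z))$.'' But $p\to\ex z\phi(z)$ is an $\L(\M)$ sentence, not an $\L$ sentence: it still contains the constants $\vec{\un c}_i$ and $\vec{\un d}$. Theorem \ref{dis} applies only to $\L$ sentences; if you regard those constants as free variables, the formula is open and no single standard numeral $\bar m$ can serve as a uniform realizer. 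Concretely, with one parameter $y$, take $p:=(0{=}0)$ and $\psi(z,y):=(z{=}y)$: then $\HA\vdash_i p\to\ex z(z{=}y)$, yet any $\bar m$ realizing this would, on input $\bar 0$, have to output a pair whose first projection equals $y$ for \emph{every} $y$ --- impossible, since the output depends only on $\bar m$ and the input. Your later claim that the extracted witness is ``a closed $\L$-term\ldots some standard numeral $\bar c$'' is therefore wrong: the witness must in general be a nonstandard element of $\M$ depending on the diagram constants. The disjunction argument has the same defect.

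The paper repairs exactly this point. Before invoking realizability it replaces all $\M$-constants by variables and universally closes, getting a genuine $\L$ sentence; then, using decidability of the atomic $p$ (Theorem \ref{dec}), it rewrites $\all\vec{x}(p(\vec{x})\to\ex z\psi(z,\vec{x}))$ as $\all\vec{x}\ex z(p(\vec{x})\to\psi(z,\vec{x}))$ and contracts the universal block via Lemma \ref{l3}. Now Theorems \ref{t1} and \ref{dis} legitimately yield a standard $\bar n$ realizing this $\forall\exists$ $\L$ sentence, i.e.\ the index of a recursive function. The hypothesis $\M\models\th_{\Pi_2}(\PA)$ enters precisely here, to guarantee totality of that function in $\M$: one feeds it the (possibly nonstandard) code $e\in\M$ of the tuple $(\vec c_1,\ldots,\vec c_n,\vec d)$, obtains $f,g\in\M$ with $T(\bar n,\un e,\un f)$ and $U(\un f)=\un g$ in $\H$, and the witness is $c=\jl(g)\in\M$. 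Disjunction is then reduced to existence via $\ex x((x{=}0\to\phi)\wedge(x{\neq}0\to\psi))$.
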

\begin{proof}
\begin{enumerate}
    \item Suppose $\phi(z)$ is $\psi(z,\vec{\un{d}})$ such that $\psi(z,\y)$ is an $\L$ formula. By assumption of the theorem there exists a finite number of $\L(\M)$ sentences $\{\phi_i(\vec{\un{c}_i})\}_{i\leq n}\subseteq \H$ such that $$\HA+\ect+\bigwedge_{i=1}^n\phi_i(\vec{\un{c}_i})\vdash_i\ex z \psi(z,\vec{\un{d}}),$$ so $\HA+\ect\vdash_i\bigwedge_{i=1}^n\phi_i(\vec{\un{c}_i})\to\ex z \psi(z,\vec{\un{d}})$. Because $\L(\M)$ constants that appear in $\bigwedge_{i=1}^n\phi_i(\vec{\un{c}_i})\to\ex z \psi(z,\vec{\un{d}})$ are not used in the axioms of $\HA+\ect$, therefore $$\HA+\ect\vdash_i \all\y,\x_1,...,\x_n(\bigwedge_{i=1}^n\phi_i(\x_i,\y)\to\ex z \psi(z,\y)).$$ Note that $\bigwedge_{i=1}^n\phi_i(\x_i,\y)$ is a quantifier free formula, hence by Lemma \ref{l2} there exists an atomic formula $p$ such that $\HA\vdash_i p(\x_1,...,x_n,\y)\lr\bigwedge_{i=1}^n\phi_i(\x_i,\y)$. Also note that by Theorem \ref{dec} $\HA\vdash_i p\lor\neg p$, hence $$\HA+\ect\vdash_i \all\y,\x_1,...,\x_n\ex z(p(\x_1,...,\x_n,\y)\to \psi(z,\y)).$$
By Lemma \ref{l3} $\HA+\ect\vdash_i\all x\ex z(p([x])\to \psi(z,[x]))$. Note that $\all x\ex z(p([x])\to \psi(z,[x]))$ is an $\L$ sentence and therefore by Theorems \ref{t1} and \ref{dis} there exists a natural number $n$ such that $$\HA\vdash_i \n\r\all x\ex z(p([x])\to \psi(z,[x])).$$ By definition of realizability we get $$\HA\vdash_i \all x\ex u(T(\n,x,u)\land U(u)\r\ex z(p([x])\to \psi(z,[x]))).$$ Note that $\HA\vdash_i \all x \ex u T(\n,x,u)$, hence $\PA\vdash_c \all x \ex u T(\n,x,u)$ and therefore $\M\models \all x \ex u T(\n,x,u)$. Let $\M\models e=\left<\vec{c_1},...,\vec{c_n},\vec{d}\right>$ and $\M\models T(\n,e,f)\land U(f)=g$ for some $e,f,g\in\M$. This implies $T(\n,\un{e},\un{f}),U(\un{f})=\un{g}\in\H$ and therefore we get $$\T\vdash_i T(\n,\un{e},\un{f})\land \un{g}\r\ex z(p([\un{e}])\to \psi(z,[\un{e}])).$$ By applying realizability definition we get $\T\vdash_i \jr(\un{g})\r(p([\un{e}])\to \psi(\jl(\un{g}),[\un{e}]))$. Note that by Theorem \ref{t1}, $$\HA+\ect\vdash_i v\r(p[x]\to \psi(w,[x]))\to (p([x])\to \psi(w,[x])),$$ so $$\T\vdash_i p([\un{e}])\to \psi(\jl(\un{g}),[\un{e}]).$$ Because $p([\un{e}])\in\H$, we get $\T\vdash_i \psi(\jl(\un{g}),[\un{e}])$ and this implies $\T\vdash_i \psi(\un{c},[\un{e}])$ for some $\un{c}\in\L(\M)$ such that $\M\models \jl(g)=c$.
\item Suppose $\T$ proves $\phi\lor \psi$, therefore $\T\vdash_i \ex x((x=0\to \phi)\land(x\not=0\to \psi))$. By the previous part there exists a constant symbol $\un{c}\in\L(\M)$ such that $\T\vdash_i (\un{c}=0\to \phi)\land(\un{c}\not=0\to \psi)$. Note that $\un{c}=0$ is an atomic formula, hence $\un{c}=0\in\H$ or $\un{c}\not=0\in\H$ and this implies  $\T\vdash_i \phi$ or $\T\vdash_i \psi$.
\end{enumerate}
\end{proof}
\begin{definition}
Let $\M$ be an $\L$ structure and $\TT$ be an intuitionistic theory in the language $\L(\M)$. For every $\L(\M)$ sentence $\phi$ such that $\TT\nvdash_i \phi$, fix a Kripke model $\K_\phi\Vdash \TT$ such that $\K_\phi\nVdash \phi$.
\end{definition}
The following definition is based on Smory\'nski collection operation in \cite{sm}.
\begin{definition}
Let $\M$ be an $\L$ structure and $\TT$ be an intuitionistic theory in the language $\L(\M)$. Define $$\S(\M,\TT)=\{\phi\in\L(\M)|\TT\nvdash_i \phi, \phi \text{ is a sentence}\}.$$ Define the universal model $\K(\M,\TT)$ as follows. Take the disjoint union $\{\K_\phi\}_{\phi\in \S(\M,\TT)}$ and then add a new root $r$ with domain $\M_r=\M$.
\end{definition}
\begin{theorem}
\label{t8}
If $\M$ is a model of $\th_{\Pi_2}(\PA)$, then $\K(\M,\T)$ is a well-defined Kripke model and for every $\L(\M)$ sentence $\phi$, $$\K(\M,\T)\Vdash \phi\LR\T\vdash_i \phi.$$ 
\end{theorem}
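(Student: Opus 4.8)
The plan is to prove both directions of the biconditional, with the genuinely new content being the left-to-right direction; the right-to-left direction is just soundness of intuitionistic forcing together with the fact that each $\K_\phi$ forces $\T$ and we have glued a root whose structure is $\M\models\H$. First I would verify that $\K(\M,\T)$ is well-defined: the root $r$ with $\M_r=\M$ must sit below every node of every $\K_\phi$, so I must check the submodel condition in clause (3) of the definition of a Kripke model. Since each $\K_\phi\Vdash\T$ and $\T\supseteq i\PRA$ contains $\H$ (the atomic diagram of $\M$), the root of each $\K_\phi$ satisfies $\diag^+(\M)$, and $\M$ embeds into it; monotonicity then propagates this upward, so attaching $r$ below everything is legitimate. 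I would also note that for the construction to make sense we need $\T$ to be consistent, which follows from Theorem \ref{t5} since $\M\models\th_{\Pi_2}(\PA)\supseteq\th_{\Pi_1}(\PA)$.

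For the substantive direction, $\K(\M,\T)\Vdash\phi\Rightarrow\T\vdash_i\phi$, the strategy is the contrapositive combined with the design of the model: if $\T\nvdash_i\phi$ then $\phi\in\S(\M,\T)$, so $\K_\phi$ is one of the summands, and by construction $\K_\phi\nVdash\phi$, meaning some node in that summand fails to force $\phi$; hence $\K(\M,\T)\nVdash\phi$. So that direction is essentially immediate from the definition of $\S$ and the choice of the $\K_\phi$.

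The real work is the forward direction $\T\vdash_i\phi\Rightarrow\K(\M,\T)\Vdash\phi$, and I expect this to be the main obstacle. The natural route is induction on the structure of $\phi$, or better, on the complexity of a derivation, showing the root $r$ forces every theorem of $\T$ — since each $\K_\phi$ already forces $\T$, and every node above $r$ lies in some $\K_\phi$, it suffices to show $r\Vdash\psi$ for each $\psi\in\T$ and then invoke soundness of forcing for intuitionistic logic to conclude $r$ forces all of $\T$'s consequences, hence so does every node. Forcing at $r$ of a universally quantified or implicational sentence quantifies over \emph{all} nodes $k\geq r$, i.e. $r$ itself plus all nodes of all the $\K_\phi$'s; the atomic and conjunction cases are handled by $\M\models\H$ together with $\K_\phi\Vdash i\PRA$. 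The delicate cases are disjunctions and existentials asserted by $\T$: here is where Theorem \ref{t6} (the existence and disjunction properties for $\T$, valid because $\M\models\th_{\Pi_2}(\PA)$) is indispensable. If $\T\vdash_i\psi_1\lor\psi_2$, the disjunction property gives $\T\vdash_i\psi_1$ or $\T\vdash_i\psi_2$; by an inductive hypothesis (on complexity, or more carefully set up) the chosen disjunct is forced at $r$, hence the disjunction is. Likewise, if $\T\vdash_i\ex z\,\chi(z)$, the existence property yields a constant $\un c\in\L(\M)$ with $\T\vdash_i\chi(\un c)$, and $\un c\in\sigma(\M_r)$, so $r\Vdash\chi(\un c)$ and thus $r\Vdash\ex z\,\chi(z)$.

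The technical care needed is in organizing the induction so that these appeals to Theorem \ref{t6} are legitimate: one wants a statement of the form ``for every $\L(\M)$ sentence $\psi$, if $\T\vdash_i\psi$ then $r\Vdash\psi$,'' proved by induction on the complexity of $\psi$, where in the $\lor$ and $\ex$ cases we first apply the disjunction/existence property to pass to a provably-equivalent simpler sentence and then apply the inductive hypothesis, while in the $\all$ and $\ra$ cases we must verify forcing at \emph{every} $k\geq r$ — which, for $k$ in some $\K_\phi$, follows from $\K_\phi\Vdash\T$ and monotonicity, and for $k=r$ reduces to the inductive hypothesis after unwinding the quantifier (using that any witnessing constant lives in some $\M_{k'}$ which is itself a node where $\T$ is forced). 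Care must also be taken with the almost-negative/atomic base cases: for atomic $p$ with $\T\vdash_i p$, either $p\in\H$ (so $r\Vdash p$ directly) or $\neg p\in\H$ would make $\T$ inconsistent by Theorem \ref{dec} and the fact that $\H\subseteq\T$; combined with Lemma \ref{l2} this handles quantifier-free sentences. Once $r$ forces all of $\T$, soundness of intuitionistic forcing gives $r\Vdash\phi$ for every $\T$-theorem $\phi$, and then monotonicity upward through the $\K_\phi$'s (which independently force $\T$ hence $\phi$) yields $\K(\M,\T)\Vdash\phi$.
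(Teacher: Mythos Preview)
Your overall plan matches the paper's: verify well-definedness via Theorem~\ref{t5}, obtain the $(\Rightarrow)$ direction by contrapositive from the construction of $\S(\M,\T)$, and prove $(\Leftarrow)$ by induction on the complexity of $\phi$, invoking Theorem~\ref{t6} in the $\lor$ and $\ex$ cases. The atomic case and the well-definedness check are handled correctly.

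There is, however, a genuine gap in your treatment of the $\to$ case at the root. You write that ``for $k=r$ [it] reduces to the inductive hypothesis after unwinding the quantifier,'' but the inductive hypothesis only runs one way: if $\T\vdash_i\eta$ then $r\Vdash\eta$. To conclude $\T\vdash_i\eta$ from the premise $r\Vdash\psi$ (together with $\T\vdash_i\psi\to\eta$), you must first extract $\T\vdash_i\psi$ from $r\Vdash\psi$. That step is exactly the \emph{forward} direction $(\Rightarrow)$, and the paper invokes it explicitly here: since $r$ is the root, $r\Vdash\psi$ gives $\K(\M,\T)\Vdash\psi$, and by the already-established $(\Rightarrow)$ this yields $\T\vdash_i\psi$, whence $\T\vdash_i\eta$ and the induction hypothesis applies. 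Without this appeal the $\to$ case does not close; your parenthetical about ``witnessing constants'' pertains to the $\all$ case and does not address implication.

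Your alternative framing---show that $r$ forces the \emph{axioms} of $\T$ and then invoke soundness---does not sidestep this. The axioms of $\HA$ and $\ect$ are themselves implications and universally quantified formulas, so verifying that $r$ forces them lands you back in the same $\to$ case at $r$. There is no route around the formula-complexity induction coupled with the use of $(\Rightarrow)$ inside the implication step; the paper makes this dependency explicit by proving $(\Rightarrow)$ first and then citing ``the previous part'' when handling $\psi\to\eta$ in the induction for $(\Leftarrow)$.
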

\begin{proof}
First note that by Theorem \ref{t5} $\T\nvdash \bot$, hence $\S(\M,\T)$ is not empty and therefore $\K(\M,\T)$ has other nodes except $r$. To make sure that $\KM$ is well-defined, we should check the three conditions in the definition of Kripke models. It is easy to see that the first two conditions hold for $\KM$. For the third condition, we need to show that for every node $k\neq r$, $\L(\M_r)\subseteq\L(\M_k)$ and $\M_k\models\diag^+(\M_r)$. By definition of $\KM$, $\L(\M_r)\subseteq\L(\M_k)$ holds. For the condition $\M_k\models\diag^+(\M_r)$, note that $\T\vdash_i \diag(\M)$ which implies $\M_k\models\diag(\M_r)$.
\begin{itemize}
\item [($\Ra$).] Let $\K(\M,\T)\Vdash \phi$. If $\T\nvdash_i \phi$, then $\K_\phi$ exists and $\K_\phi\subseteq \K(\M,\T)$. By the assumption we get $\K_\phi\Vdash \phi$, but this leads to a contradiction by definition of $\K_\phi$, hence $\T\vdash_i \phi$. 
\item [($\La$).] We prove this part by induction on the complexity of $\phi$:
\begin{enumerate}
\item $\phi=p$: Note that if $\T\vdash_i p$, then $p\in\H$. Because if $p\not\in\H$, then $\neg p\in \H$, hence $\T\vdash_i \bot$ which leads to a contradiction by Theorem \ref{t5}. Therefore $p\in\H$ and by the fact that $\M\models p$ we get $\K(\M,\T)\Vdash p$.
\item $\phi=\psi\land \eta$: By the assumption we get $\T\vdash_i \psi$ and $\T\vdash_i \eta$, therefore by the induction hypothesis $\K(\M,\T)\Vdash \psi$ and $\KM\Vdash \eta$, hence $\KM\Vdash \psi\land \eta$.
\item $\phi=\psi\lor \eta$: By Theorem \ref{t6} $\T\vdash_i \psi$ or $\T\vdash_i \eta$, therefore by the induction hypothesis $\KM\Vdash \psi$ or $\KM\Vdash \eta$, hence $\KM\Vdash \psi\lor \eta$.
\item $\phi=\psi\to \eta$: By the assumption for every $\theta\in \S(\M,\T)$, $\K_\theta\Vdash \psi\to \eta$, so for proving $\KM\Vdash \psi\to \eta$ we only need to show that if $r\Vdash \psi$, then $r\Vdash \eta$. Let $r\Vdash \psi$, therefore we have $\KM\Vdash \psi$, hence by the previous part, $\T\vdash_i \psi$. Note that By the assumption $\T\vdash_i \psi\to \eta$, hence $\T\vdash_i \eta$ and therefore by the induction hypothesis $\KM\Vdash \eta$ which implies $r\Vdash \eta$.
\item $\phi=\ex x \psi(x)$: By Theorem \ref{t6} there exists a constant symbol $\un{c}\in\L(\M)$ such that $\T\vdash_i \psi(\un{c})$, therefore by the induction hypothesis $\KM\Vdash \psi(\un{c})$, hence $\KM\Vdash \ex x\psi(x)$.
\item $\phi=\all x\psi(x)$: By the assumption for every $\theta\in \S(\M,\T)$, $\K_\theta\Vdash \all x\psi(x)$, so for proving $\KM\Vdash \all x\psi(x)$ we only need to show that for every $c\in \M$, $r\Vdash \psi(\un{c})$. Let $c\in \M$. By the assumption $\T\vdash_i \all x \psi(x)$, therefore $\T\vdash_i \psi(\un{c})$, hence by induction hypothesis $\KM\Vdash \psi(\un{c})$. This implies that $r\Vdash \psi(\un{c})$. Note that $\un{c}$ is interpreted by $c\in \M$, hence $k\Vdash \psi(c)$.
\end{enumerate}
\end{itemize}
\end{proof}

From the last theorem, we can get the characterization of the structure of the roots of Kripke models of $\HA+\ect$.
\begin{corollary}\label{char}
For every $\L$ structure $\M$, there exists a rooted Kripke model $\K\Vdash \HA+\ect$ with the root $r$ such that $\M_r=\M$ iff $\M\models\th_{\Pi_2}(\PA)$.
\end{corollary}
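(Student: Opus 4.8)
The plan is to prove the two directions of the biconditional separately, using Theorem \ref{t8} for the hard direction and the already-established facts about $\Pi_2$ conservativity for the other.

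For the direction ($\Leftarrow$): suppose $\M \models \th_{\Pi_2}(\PA)$. Then in particular $\M \models \th_{\Pi_1}(\PA)$, so by Theorem \ref{t5} the theory $\T = \HA + \ect + \H$ is consistent; moreover, since $\M$ is a model of $\th_{\Pi_2}(\PA)$, Theorem \ref{t8} applies and tells us that $\K(\M,\T)$ is a well-defined Kripke model with root $r$ satisfying $\M_r = \M$ (by construction the root's domain is exactly $\M$). It remains to check that $\K(\M,\T) \Vdash \HA + \ect$. But every axiom $\phi$ of $\HA+\ect$ is an $\L$-sentence provable in $\T$ (indeed $\T \supseteq \HA+\ect$), so $\T \vdash_i \phi$, and hence by the ($\Leftarrow$) half of Theorem \ref{t8}, $\K(\M,\T) \Vdash \phi$. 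Thus $\K(\M,\T)$ is the desired rooted Kripke model of $\HA+\ect$ with root structure $\M$.

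For the direction ($\Rightarrow$): suppose $\K$ is a rooted Kripke model of $\HA+\ect$ with root $r$ such that $\M_r = \M$. I want to show $\M \models \th_{\Pi_2}(\PA)$, i.e. that for every $\Pi_2$ sentence $\phi$ with $\PA \vdash_c \phi$ we have $\M \models \phi$. First, by the $\Pi_2$-conservativity of $\PA$ over $\HA$ (Friedman, cited in the excerpt), $\PA \vdash_c \phi$ implies $\HA \vdash_i \phi$, hence $\HA + \ect \vdash_i \phi$, hence $\K \Vdash \phi$, in particular $r \Vdash \phi$. Now I need the bridge lemma: for the root $r$ of a Kripke model, forcing a $\Pi_2$ sentence $\all \x \ex \y\, \psi(\x,\y)$ (with $\psi$ quantifier-free, hence decidable in $\HA$ by Theorem \ref{dec}) implies that $\M_r$ satisfies it. This is the standard fact that a $\Sigma_1$ (indeed $\Delta_0$) formula with parameters is forced at a node iff it is satisfied in the classical structure at that node — using Lemma \ref{l2} to replace the quantifier-free matrix by an atomic formula $p$, so that $k \Vdash p(\un{a},\un{b}) \iff \M_k \models p(\un{a},\un{b})$ by the atomic clause of forcing, and monotonicity gives the same for the $\exists$-part. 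Given $a \in \M$, apply $r \Vdash \all\x\ex\y\,\psi$ at the node $r$ itself with the parameter $\un{a}$: we get $r \Vdash \ex\y\,\psi(\un{a},\y)$, so there is $\un{b} \in \L(\M_r)$ with $r \Vdash \psi(\un{a},\un{b})$, and then $\M \models \psi(a,b)$; since $a$ was arbitrary, $\M \models \phi$.

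The main obstacle is making precise and citing correctly the bridge lemma in the ($\Rightarrow$) direction — that a $\Pi_2$ sentence forced at a node is true in that node's classical structure. This is routine (it is exactly the kind of absoluteness used implicitly throughout the Kripke-model literature and follows immediately from Lemma \ref{l2} plus the atomic clause and monotonicity of forcing), but it should be stated cleanly, perhaps as a one-line observation, before the corollary. Everything else is bookkeeping: checking that $\K(\M,\T)$ forces the axioms of $\HA+\ect$ (immediate from $\T \vdash_i$ those axioms and Theorem \ref{t8}) and invoking Theorem \ref{t5} to know the construction is non-degenerate.
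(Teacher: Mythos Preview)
Your proof is correct and follows the same approach as the paper: for the $(\Rightarrow)$ direction the paper simply cites the fact, already stated in the Introduction, that every Kripke model of $\HA$ is locally $\th_{\Pi_2}(\PA)$, whereas you unpack that fact explicitly via Friedman's $\Pi_2$-conservativity together with the coincidence of forcing and classical satisfaction for quantifier-free formulas at each node. The $(\Leftarrow)$ direction is identical to the paper's, invoking Theorem~\ref{t8} to obtain $\K(\M,\T)\Vdash\HA+\ect$ with root structure $\M$.
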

\begin{proof}
As we mentioned before, it is known that every Kripke model of $\HA$ is locally $\th_{\Pi_2}(\PA)$ which proves the left to the right direction.

For the case of right to left direction, note that if $\M\models\th_{\Pi_2}(\PA)$, then by Theorem \ref{t8} $\K(\M,\T)\Vdash \HA+\ect$ and moreover the classical structure attached to the root is $\M$.
\end{proof}

Now we have the right tool for constructing a counter example for Problem \ref{p1}. In general we can get a lot of new models for every $\M\models\th_{\Pi_2}(\PA)$. For our purpose, it is sufficient to know that $\th_{\Pi_2}(\PA)\nvdash_c \PA$ to get the result. The next two theorems established the stronger fact which says $\th_{\Pi_2}(\PA)\nvdash_c \I{\Delta_1}$. $\I\Delta_1$ is a classical theory in the language $\L$ with the following non-logical axioms:  
\begin{enumerate}
\item Axioms of Robinson Arithmetic $\sf Q$.
\item Axioms defining the primitive recursive functions.
\item $\Delta_1$ induction:
$$\all \vec{y}\left[\all x(\phi(x,\vec{y})\lr \neg\psi(x,\vec{y}))\to \I_\phi\right]$$
for every $\Sigma_1$ formulas $\phi,\psi\in\L$
\end{enumerate}

For stating the theorems we need also another arithmetical theory that is called ${\bf B}\Sigma_1$ with the following non-logical axioms:
\begin{enumerate}
\item Axioms of Robinson Arithmetic $\sf Q$.
\item Axioms defining the primitive recursive functions.
\item Induction for quantifier free formulas.
\item Bounded $\Sigma_1$ collection:
$$\forall \vec{y},x\left[\forall z(z<x\to\exists w\phi(z,w,\vec{y}))\to\exists r\forall z(z<x\to\exists w(w<r\land\phi(z,w,\vec{y}))\right]$$
for every $\Sigma_1$ formulas $\phi,\psi\in\L$
\end{enumerate}
It is worth mentioning that these theories usually are defined over the language of Peano Arithmetic, and not over the language of Primitive Recursive Arithmetic, hence our definitions of $\I{\Delta_1}$ and ${\bf B}\Sigma_1$ are stronger than the usual definition, but for our use this does not cause a problem. Now we know the definitions, we will state the theorems.
\begin{theorem}\label{eq}
$\I\Delta_1\:_c\dashv\vdash_c {\bf B}\Sigma_1$.
\end{theorem}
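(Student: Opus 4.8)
The plan is to establish mutual interpretability by proving both containments of theorems, working in the standard style of fragments of arithmetic (as in Hájek--Pudlák or Kaye). Recall that in our setting both theories include Robinson's $\sf Q$, the defining axioms of all primitive recursive functions, and quantifier-free (equivalently, by Lemma \ref{l2}, atomic) induction; the only genuine difference is that $\I\Delta_1$ adds $\Delta_1$-induction while ${\bf B}\Sigma_1$ adds bounded $\Sigma_1$-collection. I would first fix notation: throughout, ``$\Delta_1$ formula'' means a $\Sigma_1$ formula $\phi(x,\vec y)$ for which a $\Sigma_1$ formula $\psi$ is given with $\all\vec y\,\all x(\phi\lr\neg\psi)$ provable in the base theory (this is exactly the hypothesis discharged in the $\I\Delta_1$ axiom scheme), and I would note at the outset that over the base theory $\Sigma_1$-induction and $\Sigma_1$-collection are both available as derived principles in the relevant directions via the primitive recursive coding of sequences.

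\textbf{Step 1: ${\bf B}\Sigma_1 \vdash_c \I\Delta_1$.} Given a $\Sigma_1$ formula $\phi(x,\vec y)$ that is provably equivalent (in the base theory) to the negation of a $\Sigma_1$ formula $\psi(x,\vec y)$, I want $\I_\phi$. The classical argument is: suppose $\phi(0,\vec y)$ and $\all x(\phi(x,\vec y)\to\phi(Sx,\vec y))$ but, for contradiction, $\neg\phi(a,\vec y)$ for some $a$, i.e. $\psi(a,\vec y)$. Then for every $z<a$, either $\phi(z,\vec y)$ or $\psi(z,\vec y)$ holds — and writing each as a $\Sigma_1$ statement $\ex w\,\theta(z,w,\vec y)$ (disjunction of two $\Sigma_1$ matrices is again $\Sigma_1$ after pairing), apply bounded $\Sigma_1$-collection up to $Sa$ to get a single bound $r$. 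Below that bound, the predicate ``$\phi$ or $\psi$ holds with witness $<r$'' is $\Delta_0$ in the relevant parameters, so by quantifier-free induction on $z\le a$ one shows $\phi(z,\vec y)$ holds for all $z\le a$ (using the step hypothesis and the fact that $\psi(z,\vec y)$ would contradict $\phi(z,\vec y)$ for $z<a$ — here one has to be slightly careful at $z=a$, but $\neg\phi(a,\vec y)$ together with the collected bound lets us derive $\phi(a,\vec y)$ all the same), contradicting $\psi(a,\vec y)$. This is the step where the collection axiom does the real work, and I expect it to be the main obstacle: one must ensure the disjunction of the two $\Sigma_1$ matrices is handled so that collection genuinely applies, and that the subsequent induction is on a formula the base theory recognizes as quantifier-free (which Lemma \ref{l2} guarantees once the $\ex w$ is bounded and absorbed into an atomic predicate of the $\L$-coding).

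\textbf{Step 2: $\I\Delta_1 \vdash_c {\bf B}\Sigma_1$.} Here I would use the well-known derivation of $\Sigma_1$-collection from $\Sigma_1$-induction, then observe $\Sigma_1$-induction follows from $\Delta_1$-induction in $\I\Delta_1$ by the Lessan/Gandy-type trick: $\I\Delta_1$ proves $\Sigma_1$-induction because a $\Sigma_1$ predicate, although not itself $\Delta_1$, can be replaced inside an induction by its ``bounded-witness'' version. Concretely, to prove bounded $\Sigma_1$-collection, assume $\all z(z<x\to\ex w\,\phi(z,w,\vec y))$ with $\phi\in\Sigma_1$; define $f(u) := $ ``least bound $r$ such that $\all z(z<u\to\ex w<r\,\phi(z,w,\vec y))$'' and prove by induction on $u\le x$ that $f(u)$ is total — the graph of this bounded statement is $\Delta_1$ over $\I\Delta_1$ because ``$\ex w<r\,\phi$'' with $\phi$ having its existential witness also bounded (one first shortens $\phi$'s witness using the hypothesis at the single point $z=u-1$, which is pure $\Sigma_1$ and handled by a single application of $\Sigma_1$-induction available in $\I\Delta_1$) is both $\Sigma_1$ and co-$\Sigma_1$. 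Evaluating at $u=x$ yields the collecting bound $r=f(x)$. I would cite the standard references (e.g. \cite{sm1} or the Hájek--Pudlák monograph, whichever the paper uses) for the routine parts and present only the coding adjustments needed because our base language is that of Primitive Recursive Arithmetic rather than the language of $\PA$ — an adjustment that, as the paragraph before the theorem notes, only strengthens the theories and so causes no difficulty. The two steps together give $\I\Delta_1 \equiv_c {\bf B}\Sigma_1$, completing the proof.
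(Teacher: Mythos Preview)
Your Step~1 is essentially the classical Paris--Kirby argument and is fine. Step~2, however, contains a genuine gap. You assert that ``$\Sigma_1$-induction follows from $\Delta_1$-induction in $\I\Delta_1$ by the Lessan/Gandy-type trick'' and then route through $\I\Sigma_1$ to reach ${\bf B}\Sigma_1$. But $\I\Delta_1$ does \emph{not} prove $\I\Sigma_1$: it is a standard fact that ${\bf B}\Sigma_1\not\vdash_c\I\Sigma_1$, so combined with the very equivalence you are proving this gives $\I\Delta_1\not\vdash_c\I\Sigma_1$. Your more concrete attempt --- defining $f(u)$ as the least $r$ collecting witnesses below $u$ and claiming its totality is provably $\Delta_1$ --- fails at the same point. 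To exhibit the totality predicate as $\Pi_1$ (hence $\Delta_1$) you would have to bound the inner existential witness of $\phi$ uniformly in all $z<u$, and that is exactly an instance of the collection principle you are trying to derive. The parenthetical ``one first shortens $\phi$'s witness using the hypothesis at the single point $z=u{-}1$'' does not break the circularity, because a bound at a single point does not yield a uniform bound over all $z<u$ without already having collection.

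The paper does not attempt a direct proof at all: it simply cites Slaman \cite{sl}, observing that the present $\PRA$-language versions of the two theories are stronger than the usual ones, so Slaman's equivalence transfers. This is the appropriate move, since the direction $\I\Delta_1\vdash_c{\bf B}\Sigma_1$ was an open problem for roughly two decades before Slaman settled it with a genuinely nontrivial argument (a Cohen-forcing/genericity construction carried out inside $\I\Delta_1$), not the kind of elementary syntactic manipulation you sketch. If you want a self-contained proof of Step~2 you must reproduce Slaman's argument; the Paris--Kirby machinery only delivers the easy direction.
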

\begin{proof}
As we explained before, this version of these theories are stronger that the original ones. Therefore by the result of \cite{sl} these two theories are the same.
\end{proof}
\begin{theorem}
\label{t9}
There exists a model $\M\models \th_{\Pi_2}(\mathbb{N})$ such that $\M\not\models \I\Delta_1$.
\end{theorem}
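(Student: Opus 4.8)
The plan is to exhibit a model $\M \models \th_{\Pi_2}(\mathbb{N})$ which fails $\I\Delta_1$, using the equivalence $\I\Delta_1 \equiv_c {\bf B}\Sigma_1$ from Theorem \ref{eq} to reduce the task to violating bounded $\Sigma_1$ collection. The key observation is that $\th_{\Pi_2}(\mathbb{N})$ is a $\Pi_2$ theory, so by a standard model-theoretic fact any model of $\th_{\Pi_2}(\mathbb{N})$ has an \emph{end-extension} to another model of $\th_{\Pi_2}(\mathbb{N})$ — more precisely, $\Pi_2$ sentences are preserved downward under cofinal extensions and upward is automatic, but the crucial point is that $\Pi_2$ theories are exactly the theories axiomatized so that the class of their models is closed under unions of chains and admits end-extensions that add no new "small" elements. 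First I would recall (or cite from the $\Delta_0$-induction / $I\Delta_0$ literature, e.g. Hájek--Pudlák or Kaye) that $B\Sigma_1$ proper is \emph{not} $\Pi_2$-axiomatizable, hence there must be a model of all the $\Pi_2$ consequences of $B\Sigma_1$ — a fortiori of $\th_{\Pi_2}(\mathbb{N})$, since $\mathbb{N} \models B\Sigma_1$ — that fails $B\Sigma_1$ itself.

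Concretely, I would carry this out as follows. Take $\mathbb{N}$ and pass to a countable nonstandard model $\M_0 \models \th_{\Pi_2}(\mathbb{N})$ (e.g.\ a model of true arithmetic). Pick a nonstandard $a \in \M_0$ and consider the "tower" construction: build an end-extension $\M \supseteq_e \M_0$ in which there is an element $x < a$ (standard $a$ replaced by a suitable cut-bound) such that for each $z < x$ there is a witness $w$ with $\phi(z,w)$, but these witnesses are cofinal in $\M$, so no bound $r$ collects them. The standard way to do this is to take an elementary extension witnessing that a $\Sigma_1$-definable function with domain a proper cut is unbounded, then cut the model down. Alternatively — and this is cleaner for an excerpt that wants to invoke minimal machinery — I would quote the known result that the models of $B\Sigma_1$ are precisely the $\Sigma_1$-elementary submodels that are also $\Pi_2$-closed, and note that $I\Delta_0 + \neg B\Sigma_1$ has a model satisfying all $\Pi_2$ truths: this is essentially the observation that $\th_{\Pi_2}(\mathbb{N}) \subsetneq B\Sigma_1$ as deductively closed theories, which follows because $B\Sigma_1 \vdash \con(\text{something})$-type statements that are $\Pi_1$ but... — actually the simplest route is: $B\Sigma_1$ proves the totality of certain functions in a way not witnessed $\Pi_2$-boundedly, so $\th_{\Pi_2}(B\Sigma_1) = \th_{\Pi_2}(\mathbb{N}) \not\vdash B\Sigma_1$.

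The cleanest self-contained argument I would actually write: by a result of Paris and Kirby (the "$B\Sigma_1$ is not finitely axiomatizable and not $\Pi_2$-axiomatizable" phenomenon, or directly Clote--Hájek--Paris), there is a model $\M \models I\Delta_0 + \exp + \th_{\Pi_2}(\mathbb{N})$ with a nonstandard element $a$ and a $\Sigma_1$ formula $\phi(z,w)$ such that $\M \models \forall z < a\, \exists w\, \phi(z,w)$ but $\M \models \neg \exists r\, \forall z < a\, \exists w < r\, \phi(z,w)$; concretely one takes $a$ generating a cut $I = \sup\{w : \exists z < a\, \phi(z,w)\}$ that is a proper initial segment, realized in a suitable cofinal-extension/MacDowell--Specker-style construction over a model of $\th_{\Pi_2}(\mathbb{N})$. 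Then $\M \not\models {\bf B}\Sigma_1$, and by Theorem \ref{eq}, $\M \not\models \I\Delta_1$. Since $\M$ was built as a model of $\th_{\Pi_2}(\mathbb{N})$, we are done.

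The main obstacle is making the end-/cofinal-extension construction genuinely produce a model of all of $\th_{\Pi_2}(\mathbb{N})$ (not merely of $I\Delta_0$) while killing a collection instance: one must ensure the new witnesses are added cofinally (so that $\Pi_2$ truths, which only assert "for all $x$ there is $y$", survive) yet the bounding element for the collection schema is \emph{not} added, which is exactly the tension a cofinal-but-not-end extension resolves. I expect the write-up to lean on the published fact that $\th_{\Pi_2}(\mathbb{N}) \not\vdash_c {\bf B}\Sigma_1$ (equivalently the strictness $\th_{\Pi_2}(I\Delta_0+\exp) \subsetneq {\bf B}\Sigma_1$), citing Hájek--Pudlák or the Paris--Kirby hierarchy results, rather than reconstructing the extension argument from scratch — that citation is the one genuinely nontrivial ingredient, everything else is Theorem \ref{eq} plus the trivial fact $\mathbb{N} \models {\bf B}\Sigma_1$.
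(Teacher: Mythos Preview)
Your proposal is correct and takes essentially the same approach as the paper: reduce via Theorem~\ref{eq} to finding a model of $\th_{\Pi_2}(\mathbb{N})$ that fails ${\bf B}\Sigma_1$, and then cite the literature for that fact (the paper cites Adamowicz~\cite{r}; you suggest H\'ajek--Pudl\'ak or Paris--Kirby). Your exploratory middle paragraphs on end/cofinal extensions are unnecessary and contain some slips (e.g.\ $\th_{\Pi_2}({\bf B}\Sigma_1)\neq\th_{\Pi_2}(\mathbb{N})$), but you rightly abandon them for the direct citation.
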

\begin{proof}
By the result of \cite{r} there exists a model $\M\models\th_{\Pi_2}(\mathbb{N})$ such that $\M\not\models{\bf B}\Sigma_1$, hence by Theorem \ref{eq} $\M\not\models\I\Delta_1$ too.
\end{proof}

\begin{corollary}\label{fin}
There exists a rooted Kripke model of $\HA+\ect$ which is not locally $\I\Delta_1$.
\end{corollary}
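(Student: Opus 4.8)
The plan is to simply feed the model produced by Theorem \ref{t9} into the universal model construction of Theorem \ref{t8}. First I would recall that since $\mathbb{N}\models\PA$, every $\Pi_2$ consequence of $\PA$ is true in $\mathbb{N}$, so $\th_{\Pi_2}(\PA)\subseteq\th_{\Pi_2}(\mathbb{N})$. Hence the structure $\M$ given by Theorem \ref{t9}, which satisfies $\th_{\Pi_2}(\mathbb{N})$ and fails $\I\Delta_1$, in particular satisfies $\th_{\Pi_2}(\PA)$.

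Next I would invoke Theorem \ref{t8} with this $\M$: it guarantees that $\K(\M,\T)$ is a well-defined Kripke model and that for every $\L(\M)$ sentence $\phi$ we have $\K(\M,\T)\Vdash\phi$ iff $\T\vdash_i\phi$. Since $\T=\HA+\ect+\H$ extends $\HA+\ect$, each axiom of $\HA+\ect$ is provable in $\T$, so $\K(\M,\T)\Vdash\HA+\ect$; and by construction $\K(\M,\T)$ is rooted with root $r$ whose classical structure is $\M_r=\M$.

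Finally, since $\M_r=\M\not\models\I\Delta_1$, the Kripke model $\K(\M,\T)$ has a node whose associated classical structure does not model $\I\Delta_1$, i.e.\ it is not locally $\I\Delta_1$. (One could equally phrase this step via Corollary \ref{char}, which already packages Theorem \ref{t8} into the statement that any $\M\models\th_{\Pi_2}(\PA)$ arises as the root of a Kripke model of $\HA+\ect$.) I do not anticipate any real obstacle here: the corollary is an immediate combination of Theorems \ref{t8} and \ref{t9}, the only thing worth stating explicitly being the trivial inclusion $\th_{\Pi_2}(\PA)\subseteq\th_{\Pi_2}(\mathbb{N})$ that lets the hypothesis of Theorem \ref{t8} be met.
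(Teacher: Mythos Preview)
Your proposal is correct and follows essentially the same route as the paper's own proof: invoke Theorem~\ref{t9} to obtain $\M\models\th_{\Pi_2}(\mathbb{N})$ with $\M\not\models\I\Delta_1$, then apply Theorem~\ref{t8} (or equivalently Corollary~\ref{char}) to make $\M$ the root of a Kripke model of $\HA+\ect$. The only addition you make is spelling out the inclusion $\th_{\Pi_2}(\PA)\subseteq\th_{\Pi_2}(\mathbb{N})$, which the paper leaves implicit.
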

\begin{proof}
By Theorem \ref{t9} there exists a model $\M\models \th_{\Pi_2}(\mathbb{N})$ such that $\M\not\models \I\Delta_1$. Note that by Theorem \ref{t8}, $\KM\Vdash \HA+\ect$, and also $\KM$ is not locally $\I\Delta_1$.
\end{proof}

$\ect$ is a very powerful non-classical axiom schema, so a natural question is that: {\it Is it the case that for every Kripke model $\K\Vdash \HA+\ect$ and every node $k$ in $\K$, $\M_k\not\models\PA$ ?} This question has a negative answer, because $\K(\mathbb{N},{\bf T}_\mathbb{N})\Vdash \HA+\ect$, but $\M_r\models\PA$.
\subsection{The second model construction}
In this subsection, we will explain the generalized construction which works for any reasonable intuitionistic arithmetical theory. We will also mention an application of it at the end of this subsection.

For every $\TT\in\II$, the existence property of $\TT$ is the following $\Pi_2$ sentence:
 $$\ep(\TT):=\all x(x=\ulcorner\ex y\phi(y)\urcorner\text{ for some formula }\phi(y) \land x\text{ is a sentence}\land \pr_\TT(x)\to\ex y\pr_\TT(\ulcorner\phi(\dot{y})\urcorner)).$$
For an $\L$ structure $\M$ and a theory $\TT\in\II$, let extension of $\TT$ with respect to $\M$ be the following theory:
$$\ext(\M,\TT):=\{\phi\in\L(\M)|\phi\text{ is a sentence}, \M\models\pr_\TT(\ulcorner\phi\urcorner)\}.$$ 
The following lemma states that $\ext(\M,\TT)$ is closed under finite conjunctions.
\begin{lemma}
Let $\M\models \PRA$ and $\TT\in\II$. Then for every $\L(\M)$ sentences $\phi$ and $\psi$, if $\phi,\psi\in\ext(\M,\TT)$, then $\phi\land\psi\in\ext(\M,\TT)$.
\end{lemma}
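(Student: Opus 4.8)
The plan is to unwind the definition of $\ext(\M,\TT)$ and reduce the statement to a single application of Theorem \ref{pr}, item 3, read inside the model $\M$. By definition, $\phi\in\ext(\M,\TT)$ means exactly $\M\models\pr_\TT(\ulcorner\phi\urcorner)$, and similarly for $\psi$; so the hypotheses give $\M\models\pr_\TT(\ulcorner\phi\urcorner)\land\pr_\TT(\ulcorner\psi\urcorner)$, and what must be produced is $\M\models\pr_\TT(\ulcorner\phi\land\psi\urcorner)$, i.e. $\phi\land\psi\in\ext(\M,\TT)$.

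Next I would invoke Theorem \ref{pr}(3): $\PRA\vdash_c\all x,y(\pr_\TT(x)\land\pr_\TT(y)\to\pr_\TT(x\land y))$, where $x\land y$ abbreviates the value of the primitive recursive ``conjoin codes'' function applied to $x$ and $y$. Since $\M\models\PRA$ by assumption, this universal $\L$-sentence is true in $\M$. Instantiating the quantified variables at the elements $\ulcorner\phi\urcorner,\ulcorner\psi\urcorner\in\M$ (and using the two hypotheses) then yields $\M\models\pr_\TT(\ulcorner\phi\urcorner\land\ulcorner\psi\urcorner)$.

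Finally I would identify $\ulcorner\phi\urcorner\land\ulcorner\psi\urcorner$ with $\ulcorner\phi\land\psi\urcorner$ as elements of $\M$, which gives the conclusion at once. Writing $\phi$ as $\chi(\vec{\un c})$ and $\psi$ as $\delta(\vec{\un d})$ for $\L$-formulas $\chi,\delta$ and parameters from $\M$, the code $\ulcorner\phi\urcorner$ is, by convention, the result of applying the primitive recursive numeral-substitution function to $\ulcorner\chi\urcorner$ and the $\dot c_i$ (evaluated in $\M$), and likewise for $\psi$; the needed identity is then the instance, at these arguments, of the primitive recursive equation stating that numeral substitution commutes with conjunction formation, which is provable already in $i\PRA$ and hence holds in $\M$. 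I expect this last piece of bookkeeping about the G\"odel numbering of $\L(\M)$-sentences carrying (possibly nonstandard) parameters to be the only delicate point; the rest is a verbatim use of Theorem \ref{pr} with $\M$ in place of the standard model. One could alternatively avoid the substitution calculus entirely by noting that the same argument shows $\M\models\pr_\TT(\ulcorner\phi\land\psi\urcorner)$ directly once it is agreed that the numbering of $\L(\M)$-sentences is chosen compatibly with the syntactic operations, which is implicit in the earlier use of $\ulcorner\cdot\urcorner$ on $\L(\M)$-formulas.
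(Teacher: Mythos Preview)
Your proof is correct and follows exactly the same route as the paper: unfold the definition of $\ext(\M,\TT)$, invoke Theorem~\ref{pr}(3) inside $\M$ (using $\M\models\PRA$), and conclude. You are in fact more explicit than the paper about the coding bookkeeping (identifying the primitive recursive conjunction of codes with the code of the conjunction for $\L(\M)$-sentences with parameters); the paper simply writes ``so by Theorem~\ref{pr} $\M\models\pr_\TT(\ulcorner\phi\land\psi\urcorner)$'' and leaves this implicit.
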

\begin{proof}
If $\phi,\psi\in\ext(\M,\TT)$, then $\M\models \pr_\TT(\ulcorner\phi\urcorner)\land\pr_\TT(\ulcorner\psi\urcorner)$, so by Theorem \ref{pr} $\M\models \pr_\TT(\ulcorner \phi\land\psi\urcorner)$. Hence $\phi\land\psi\in\ext(\M,\TT)$.
\end{proof}

Define 
$$\C_{\M,\TT}:=\TT+\ext(\M,\TT).$$
The crucial property of $\C_{\M,\TT}$ is the following lemma.
\begin{lemma}\label{l12}
Suppose $\M\models \PRA$. Then for every $\TT\in\II$ and every $\L(\M)$ sentence $\psi$, if $\C_{\M,\TT}\vdash_i \psi$, then $\M\models \pr_\TT(\ulcorner\psi\urcorner)$.
\end{lemma}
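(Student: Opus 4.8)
The plan is to formalize, inside $\M$, the soundness of the intuitionistic proof calculus under the provability predicate $\pr_\TT$. The crucial observation is that a $\C_{\M,\TT}$-derivation of $\psi$ uses only finitely many sentences of $\ext(\M,\TT)$, and each of these is, by the very definition of $\ext(\M,\TT)$, $\pr_\TT$-provable in $\M$. So I would begin from finiteness of derivations: if $\C_{\M,\TT}\vdash_i\psi$, then there are $\L(\M)$ sentences $\phi_1,\dots,\phi_n\in\ext(\M,\TT)$ with $\TT+\{\phi_1,\dots,\phi_n\}\vdash_i\psi$, and hence, by the deduction theorem, $\TT\vdash_i\bigwedge_{i=1}^n\phi_i\to\psi$. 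Let $\un{c_1},\dots,\un{c_m}$ list all the constants from $\M$ occurring in $\phi_1,\dots,\phi_n,\psi$, and write $\phi_i=\phi_i'(\un{c_1},\dots,\un{c_m})$ and $\psi=\psi'(\un{c_1},\dots,\un{c_m})$ with $\phi_i',\psi'\in\L$. Since none of these constants occurs in an axiom of $\TT$, they can be replaced by fresh variables throughout the derivation, and universal generalization gives
$$\TT\vdash_i\all x_1\dots x_m\Big(\bigwedge_{i=1}^n\phi_i'(x_1,\dots,x_m)\to\psi'(x_1,\dots,x_m)\Big).$$

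Next I would internalize this in $\M$. By Theorem \ref{pr}(1) together with $\M\models\PRA$, we get $\M\models\pr_\TT(\ulcorner\all\vec{x}\,(\bigwedge_i\phi_i'(\vec{x})\to\psi'(\vec{x}))\urcorner)$. Applying Theorem \ref{pr}(4) $m$ times (in its multi-variable form, peeling off one universal quantifier at a time) yields
$$\M\models\all x_1\dots x_m\,\pr_\TT\big(\ulcorner\big(\bigwedge_i\phi_i'\to\psi'\big)(\dot{x_1},\dots,\dot{x_m})\urcorner\big),$$
and instantiating $\vec{x}$ at $c_1,\dots,c_m$ gives $\M\models\pr_\TT(\ulcorner\bigwedge_i\phi_i\to\psi\urcorner)$, where the corner quotes around an $\L(\M)$ sentence are read via numeral substitution as in the definition of $\ext(\M,\TT)$. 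To finish, each $\phi_i\in\ext(\M,\TT)$ means $\M\models\pr_\TT(\ulcorner\phi_i\urcorner)$, so the preceding lemma (closure of $\ext(\M,\TT)$ under finite conjunction), applied $n-1$ times, gives $\M\models\pr_\TT(\ulcorner\bigwedge_{i=1}^n\phi_i\urcorner)$. Combining this with $\M\models\pr_\TT(\ulcorner\bigwedge_i\phi_i\to\psi\urcorner)$ through Theorem \ref{pr}(2) (formalized modus ponens, available since $\M\models\PRA$) then gives $\M\models\pr_\TT(\ulcorner\psi\urcorner)$, as required.

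I expect the only genuinely delicate point to be the bookkeeping with the nonstandard constants $\un{c_j}$: one has to check that eliminating them from the $\TT$-derivation in favour of fresh variables is legitimate, that $\ulcorner\cdot\urcorner$ applied to an $\L(\M)$ sentence is consistently read as a primitive recursive term evaluated at the $c_j$, and---the real subtlety---that Theorem \ref{pr}(4) may be used to instantiate at the nonstandard elements $c_j\in\M$, which needs the uniform (multi-variable) form of that derivability condition rather than the bare schema. Everything else is a routine transcription inside $\pr_\TT$ of the soundness of natural deduction, relying only on items (1), (2) and (4) of Theorem \ref{pr} and the conjunction-closure lemma.
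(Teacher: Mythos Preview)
Your proposal is correct and follows essentially the same route as the paper: extract a finite set of hypotheses from $\ext(\M,\TT)$, apply the deduction theorem, replace the $\L(\M)$ constants by fresh variables and universally generalize, then internalize in $\M$ via Theorem~\ref{pr}(1), strip the universal quantifiers with Theorem~\ref{pr}(4) to re-instantiate at the elements of $\M$, and finish with the conjunction-closure lemma and formalized modus ponens (Theorem~\ref{pr}(2)). The only cosmetic difference is that the paper keeps separate variable tuples $\vec{x}_i,\vec{y}$ for the $\phi_i$ and $\psi$, whereas you pool all occurring constants into one list $c_1,\dots,c_m$; and you are right to flag the need for the iterated (multi-variable) form of Theorem~\ref{pr}(4), which the paper uses tacitly.
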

\begin{proof}
Let $\psi(\vec{\un{d}})$ be an $\L(\M)$ sentence such that $\C_{\M,\TT}\vdash_i \psi(\vec{\un{d}})$. So there exists a finite number of $\L(\M)$ sentence $\{\phi_i(\vec{\un{c}_i})\}_{i\leq n}\subseteq \ext(\M,\TT)$ such that $$\TT\vdash_i \bigwedge_{i=1}^n\phi_i(\vec{\un{c}_i})\to\psi(\vec{\un{d}}).$$ 
Because $\L(\M)$ constants that appear in $\bigwedge_{i=1}^n\phi_i(\vec{\un{c}_i})\to \psi(\vec{\un{d}})$ are not used in the axioms of $\TT$, therefore $$\TT\vdash_i \all\y,\x_1,...,\x_n(\bigwedge_{i=1}^n\phi_i(\x_i,\y)\to \psi(\y)).$$
So by Theorem \ref{pr}
$$\M\models \pr_\TT(\ulcorner\all\y,\x_1,...,\x_n(\bigwedge_{i=1}^n\phi_i(\x_i,\y)\to \psi(\y))\urcorner).$$
Hence again by Theorem \ref{pr}
$$\M\models \pr_\TT(\ulcorner\bigwedge_{i=1}^n\phi_i(\vec{\dot{c}}_i)\to \psi(\vec{\dot{d}})\urcorner).$$
On the other hand by Lemma \ref{l12} $\ext(\M,\TT)$ is closed under finite conjunctions, so $\bigwedge_{i=1}^n\phi_i(\vec{\un{c}_i})\in\ext(\M,\TT)$ which means $\M\models\pr_\TT(\ulcorner\bigwedge_{i=1}^n\phi_i(\vec{\dot{c}}_i)\urcorner)$. So by Theorem \ref{pr} $\M\models\pr_\TT(\ulcorner\psi(\vec{\dot{d}})\urcorner)$.
\end{proof}
\begin{theorem}\label{t12}
For every $\TT\in\II$ and every $\M\models \PRA+\ep(\TT)+\con(\TT)$, the following statements are true:
\begin{enumerate}
    \item $\C_{\M,\TT}$ is consistent.
    \item $\C_{\M,\TT}$ has the existence and disjunction properties.
\end{enumerate}
\end{theorem}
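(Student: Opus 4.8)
The plan is to derive all three assertions from Lemma \ref{l12}, which already carries the essential content: it sends any $\C_{\M,\TT}$-provable $\L(\M)$-sentence $\psi$ to a true arithmetical statement $\M\models\pr_\TT(\ulcorner\psi\urcorner)$ about $\M$. The hypotheses $\con(\TT)$ and $\ep(\TT)$ on $\M$ are then exactly what is needed to turn such $\pr_\TT$-statements into the desired conclusions, in close analogy with the proof of Theorem \ref{t6} (with $\ep(\TT)$ now playing the role that Kleene realizability played there). Here ``existence and disjunction properties'' are read in the $\L(\M)$-formulation of Theorem \ref{t6}. Consistency is then immediate: if $\C_{\M,\TT}\vdash_i\bot$, then Lemma \ref{l12} applied with $\psi:=\bot$ gives $\M\models\pr_\TT(\ulcorner\bot\urcorner)$, contradicting $\M\models\con(\TT)$; hence $\C_{\M,\TT}$ is consistent.

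For the existence property, suppose $\ex z\,\phi(z)$ is an $\L(\M)$-sentence with $\C_{\M,\TT}\vdash_i\ex z\,\phi(z)$, and write $\phi(z)=\psi(z,\vec{\un d})$ where $\psi\in\L$ and $\vec d$ lists the finitely many elements of $\M$ whose constants occur in $\phi$. By Lemma \ref{l12}, $\M\models\pr_\TT(\ulcorner\ex z\,\psi(z,\dot{\vec d})\urcorner)$. From $\M$'s internal viewpoint the number $x:=\ulcorner\ex z\,\psi(z,\dot{\vec d})\urcorner$ is (verifiably in $\PRA$) the code of a sentence of the form $\ex z\,\chi(z)$ whose matrix $\chi$ is the formula $\psi(z,\dot{\vec d})$, and $\pr_\TT(x)$ holds; since $\M\models\ep(\TT)$, the relevant instance of $\ep(\TT)$ for $x$ yields some $c\in\M$ with $\M\models\pr_\TT(\ulcorner\psi(\dot c,\dot{\vec d})\urcorner)$. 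Thus the $\L(\M)$-sentence $\phi(\un c)=\psi(\un c,\vec{\un d})$ belongs to $\ext(\M,\TT)$, so it is an axiom of $\C_{\M,\TT}$, and in particular $\C_{\M,\TT}\vdash_i\phi(\un c)$.

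For the disjunction property, let $\phi,\psi$ be $\L(\M)$-sentences with $\C_{\M,\TT}\vdash_i\phi\lor\psi$. Since $i\PRA\subseteq\TT$ proves $(\phi\lor\psi)\to\ex x\big((x=0\to\phi)\land(x\neq 0\to\psi)\big)$, we get $\C_{\M,\TT}\vdash_i\ex x\big((x=0\to\phi)\land(x\neq 0\to\psi)\big)$, and the existence property just established supplies $\un c\in\L(\M)$ with $\C_{\M,\TT}\vdash_i(\un c=0\to\phi)\land(\un c\neq 0\to\psi)$. The classical structure $\M$ decides $\un c=0$. If $\M\models\un c=0$, then $\un c=0$ is a true $\Delta_0$ (hence $\Sigma_1$) $\L(\M)$-sentence, so by the formalized $\Sigma_1$-completeness of $\TT$ (the fifth item of Theorem \ref{pr}) $\M\models\pr_\TT(\ulcorner\dot c=\dot 0\urcorner)$, i.e.\ $(\un c=0)\in\ext(\M,\TT)\subseteq\C_{\M,\TT}$; together with $\C_{\M,\TT}\vdash_i(\un c=0\to\phi)$ this gives $\C_{\M,\TT}\vdash_i\phi$. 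Symmetrically, if $\M\models\un c\neq 0$ then $\C_{\M,\TT}\vdash_i\psi$.

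The only delicate step is the bookkeeping in the existence-property argument: one must carefully match the external $\L(\M)$-sentence $\ex z\,\psi(z,\vec{\un d})$ with the internal, $\M$-coded sentence to which $\ep(\TT)$ is applied, and verify that the matrix that $\ep(\TT)$ extracts from the code $x$ is precisely (the code of) $\psi(z,\dot{\vec d})$, so that the extracted $\pr_\TT$-statement really does correspond to the $\L(\M)$-sentence $\phi(\un c)$. This is where the assumption $\ep(\TT)$ substitutes for the realizability computation in Theorem \ref{t6}; once it is arranged correctly, everything else is routine and parallels the structure of the proof of Theorem \ref{t6}.
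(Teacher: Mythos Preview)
Your proof is correct and follows essentially the same route as the paper: both parts are reduced to Lemma \ref{l12}, with $\con(\TT)$ giving consistency, $\ep(\TT)$ giving the existence property (yielding a witness $\un c$ with $\phi(\un c)\in\ext(\M,\TT)$), and the disjunction property reduced to the existence property exactly as in Theorem \ref{t6}. Your explicit treatment of the parameters $\vec{\un d}$ and the use of Theorem \ref{pr}(5) to place the decided atomic sentence in $\ext(\M,\TT)$ just spell out details the paper leaves implicit.
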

\begin{proof}
$ $

\begin{enumerate}
    \item Suppose $\C_{\M,\TT}\vdash_i \bot$. Then by Lemma \ref{l12} $\M\models \pr_\TT(\ulcorner\bot\urcorner)$, but this is not possible because we assumed $\M\models\con(\TT)$, hence $\C_{\M,\TT}$ is consistent.
    \item We will prove the existence property of $\C_{\M,\TT}$. The disjunction property will follow from it by the same argument as in the proof of Theorem \ref{t6}. Let $\psi(x)$ be a formula in $\L(\M)$ with only $x$ as the free variable. Suppose $\C_{\M,\TT}\vdash_i \exists x \psi(x)$. Then by Lemma \ref{l12} $\M\models \pr_\TT(\ulcorner \ex x \psi(x)\urcorner)$. Note that $\M\models \ep(\TT)$, hence $\M\models \ex x \pr_\TT(\ulcorner\psi(\dot{x})\urcorner)$. This means there exists a $c\in\M$ such that $\M\models \pr_\TT(\ulcorner\psi(\dot{c})\urcorner)$. This implies $\psi(\un{c})\in\ext(\M,\TT)$, so $\C_{\M,\TT}\vdash_i \psi(\un{c})$.
\end{enumerate}
\end{proof}

This is the generalized version of the Theorem \ref{t8} which gives us the sufficient condition.
\begin{theorem}\label{t14}
Let $\TT\in\II$ and $\M\models\PRA+\ep(\TT)+\con(\TT)$. Then $\K(\M,\C_{\M,\TT})$ is a well-defined Kripke model and for every $\L(\M)$ sentence $\phi$, $$\K(\M,\C_{\M,\TT})\Vdash \phi \Leftrightarrow \C_{\M,\TT}\vdash_i \phi.$$
\end{theorem}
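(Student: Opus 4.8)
The plan is to transcribe the proof of Theorem \ref{t8} almost verbatim, with Theorem \ref{t12} supplying what Theorems \ref{t5} and \ref{t6} supplied there: part (1) of Theorem \ref{t12} gives that $\C_{\M,\TT}$ is consistent, so $\S(\M,\C_{\M,\TT})$ is nonempty and $\K(\M,\C_{\M,\TT})$ really does have nodes above its root; part (2) gives the existence and disjunction properties of $\C_{\M,\TT}$, including the witnessing constants $\un{c}\in\L(\M)$. First I would verify the three conditions in the definition of a Kripke model for $\K(\M,\C_{\M,\TT})$. Conditions 1 and 2 are immediate from the construction, and for condition 3 only the substructure requirement needs an argument: for every node $k\neq r$ one needs $\L(\M)\subseteq\L(\M_k)$ (built into the construction, since $\C_{\M,\TT}$ is a theory in $\L(\M)$) together with $\M_k\models\diag^+(\M)$.

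The one genuinely new point, compared with the $\ect$-case where $\T$ literally contains $\H$ among its axioms, is that here I must show $\diag(\M)\subseteq\C_{\M,\TT}$, in fact $\diag(\M)\subseteq\ext(\M,\TT)$. Let $\phi\in\diag(\M)$; then $\phi$ is $\psi(\vec{\un{c}})$ for an atomic or negated-atomic $\L$-formula $\psi$, with $\M\models\psi(\vec{\un{c}})$. Such a $\psi$ is quantifier-free, hence $\Delta_0$ and in particular $\Sigma_1$, so by part (5) of Theorem \ref{pr} we have $\PRA\vdash_c\all\vec{x}\,(\psi(\vec{x})\to\pr_\TT(\ulcorner\psi(\dot{x}_1,\dots,\dot{x}_m)\urcorner))$; since $\M\models\PRA$ this gives $\M\models\pr_\TT(\ulcorner\phi\urcorner)$, i.e. $\phi\in\ext(\M,\TT)$. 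Consequently every $\K_\phi$, which forces $\C_{\M,\TT}$, forces all of $\diag(\M)$; as the sentences in $\diag(\M)$ are atomic or negated atomic, monotonicity of forcing yields $\M_k\models\diag^+(\M)$ at every node $k$ of every $\K_\phi$. Hence $\K(\M,\C_{\M,\TT})$ is a well-defined Kripke model.

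For the forcing equivalence I would argue both directions exactly as in Theorem \ref{t8}. For $(\Rightarrow)$: if $\C_{\M,\TT}\nvdash_i\phi$ then $\phi\in\S(\M,\C_{\M,\TT})$, so $\K_\phi$ is one of the branches glued above $r$, and its node set is upward closed in $\K(\M,\C_{\M,\TT})$ (the only node strictly below the root of $\K_\phi$ is $r$, which lies outside $\K_\phi$); hence forcing of $\L(\M)$-sentences is absolute between $\K_\phi$ and $\K(\M,\C_{\M,\TT})$, so $\K(\M,\C_{\M,\TT})\Vdash\phi$ would give $\K_\phi\Vdash\phi$, contradicting the choice of $\K_\phi$. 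For $(\Leftarrow)$ I would induct on the complexity of $\phi$, in the pattern of Theorem \ref{t8}: in the atomic case, if $\M\not\models p$ then $\neg p\in\diag(\M)\subseteq\C_{\M,\TT}$, which together with $\C_{\M,\TT}\vdash_i p$ contradicts consistency (Theorem \ref{t12}(1)), so $\M\models p$ and thus every node forces $p$; the $\land$ case is immediate from the induction hypothesis; the $\lor$ and $\ex$ cases use the disjunction and existence properties (Theorem \ref{t12}(2)); and the $\to$ and $\all$ cases reduce, at nodes $k\neq r$, to $\K_\phi\Vdash\C_{\M,\TT}$, and at the root to the already established $(\Rightarrow)$ direction (if $r\Vdash\psi$ then by monotonicity $\K(\M,\C_{\M,\TT})\Vdash\psi$, so $\C_{\M,\TT}\vdash_i\psi$, whence $\C_{\M,\TT}\vdash_i\eta$, so $\K(\M,\C_{\M,\TT})\Vdash\eta$ by the induction hypothesis, so $r\Vdash\eta$; and similarly for $\all$ with a fixed $\un{c}\in\L(\M)$).

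I expect no serious obstacle here. The substantive work, namely consistency and the existence and disjunction properties, has already been isolated into Theorem \ref{t12}, whose proof rests on Lemma \ref{l12} and $\M\models\ep(\TT)+\con(\TT)$, and everything else is a transcription of the proof of Theorem \ref{t8}. The only spot calling for fresh (if routine) thought is the substructure clause of well-definedness, namely the observation above that $\M\models\PRA$ together with formalized $\Sigma_1$-completeness (Theorem \ref{pr}(5)) forces $\diag(\M)\subseteq\ext(\M,\TT)$; this plays the role of the trivial inclusion $\H\subseteq\T$ that was free in the first construction and must be supplied by hand in the second.
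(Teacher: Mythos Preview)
Your proposal is correct and matches the paper's own proof: both transcribe the argument of Theorem \ref{t8}, invoking Theorem \ref{t12} in place of Theorems \ref{t5} and \ref{t6}, and both isolate the inclusion $\diag(\M)\subseteq\ext(\M,\TT)$ (via formalized $\Sigma_1$-completeness, Theorem \ref{pr}(5)) as the one genuinely new ingredient needed for well-definedness. The only minor divergence is in the atomic base case of the $(\Leftarrow)$ induction: the paper derives $\M\models p$ from $\C_{\M,\TT}\vdash_i p$ by first applying Lemma \ref{l12} to get $\M\models\pr_\TT(\ulcorner p\urcorner)$ and then invoking $\Pi_1$-reflection (provable in $\PRA+\con(\TT)$), whereas your contrapositive argument ($\M\not\models p\Rightarrow\neg p\in\diag(\M)\subseteq\C_{\M,\TT}$, contradicting consistency) mirrors the proof of Theorem \ref{t8} more literally and sidesteps the reflection principle; both routes are valid.
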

\begin{proof}
The proof of this theorem is essentially the same as the proof of the Theorem \ref{t8} by using the Theorem \ref{t12}. The only part that needs some extra work is the fact that $\C_{\M,\TT}\vdash_i \diag(\M)$ and moreover if $\C_{\M,\TT}\vdash_i p$ for atomic $p$, then $p\in\diag(\M)$. 

Let $p\in\diag(\M)$. We know by Theorem \ref{pr} $\M\models p\to\pr_\TT(\ulcorner p\urcorner)$. This implies $\M\models\pr_\TT(\ulcorner p\urcorner)$. So $p\in\ext(\M,\TT)$ which implies $\C_{\M,\TT}\vdash_i p$.

Now if we have $\C_{\M,\TT}\vdash_i p$ for some atomic $\L(\M)$ sentence $p$, then by Lemma \ref{l12} $\M\models\pr_\TT(\ulcorner p\urcorner)$. Note that $\M\models \con(\TT)$, so in presence of $\PRA$, $\M\models \Pi_1\text{-}\rfn(\TT)$ which $\Pi_1\text{-}\rfn(\TT)$ is the following sentence:
$$\all x(x\in\Pi_1\land \pr_\TT(x)\to\tr(x))$$
where $\tr$ is a natural $\Pi_1$ formula which works as the truth predicate for $\Pi_1$ sentence. Substituting $\ulcorner p\urcorner$ for $x$ in $\Pi_1\text{-}\rfn(\TT)$, we get $\M\models \tr(\ulcorner p\urcorner)$, hence $\M\models p$ which means $p\in\diag(\M)$.
\end{proof}

As we already see, using the first construction, we provide a Kripke model of $\HA+\ect$ which is not locally $\I\Delta_1$. A natural conjecture would be that the existence of such a Kripke model was possible because the base theory has a very powerful non-classical schema $\ect$. As an application of Theorem \ref{t14} we will show this is not the case. Let $\halt(x)$ be a $\Sigma_1$ formula that is a natural formalization of the statement "The Turing machine with code $x$ halts on input $x$". Let $\theta$ be an instance of $\ect$ in Definition \ref{d6} such that $\phi(x):=\top$ and $\psi(x,y):=(y=0\land \halt(x))\lor (y\neq 0\land \neg\halt(x))$. We also need the definition of Markov's principle.
\begin{definition}
Markov's principle is the following schema:
$$\mp:=\all \vec{y}(\all x(\phi(x,\vec{y})\lor\neg\phi(x,\vec{y}))\land \neg\neg\ex x\phi(x,\vec{y})\to\ex x\phi(x,\vec{y})) .$$
\end{definition}

\begin{lemma}\label{l17}
The following statements are true:
\begin{enumerate}
    \item $\HA+\neg\theta+\mp$ is consistent.
    \item $\HA+\neg\theta+\mp$ has the existence and disjunction properties.
\end{enumerate}
\end{lemma}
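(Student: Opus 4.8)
The plan is to derive Lemma \ref{l17} as an application of Theorem \ref{t14} with $\TT:=\HA+\neg\theta+\mp$. First I would check that this $\TT$ lies in $\II$: it contains $i\PRA$ (indeed all of $\HA$), and both $\neg\theta$ and the Markov schema are given by recursively enumerable sets of axioms, so the axiom set of $\TT$ is recursively enumerable; consistency is the one nontrivial clause, and I would establish it separately, say by exhibiting a model or citing the standard fact that $\HA+\mp$ is consistent with $\neg\theta$ because $\theta$ is an instance of $\ect$ that fails under, e.g., the function-realizability or Kleene-realizability-with-truth interpretation validating $\mp$ but refuting this particular $\theta$ (the statement "there is a total recursive function deciding the halting set" is exactly what $\theta$ asserts, and it is refutable while $\mp$ is realizable). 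Call this model $\N$; then $\N\models\con(\TT)$ classically, and in fact $\N$ can be taken to satisfy full $\PA$, hence $\PRA$.

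Next, the key move is to produce a model $\M\models\PRA+\ep(\TT)+\con(\TT)$. Here $\ep(\TT)$ is the arithmetized statement of the existence property of $\TT$, which is a $\Pi_2$ sentence. Since $\HA+\mp$ (and more generally $\HA+\mp$ together with consistent extensions by suitable non-classical axioms) is known to have the existence property for the right syntactic reasons — for $\HA+\mp$ this is a classical result, and one must verify it survives the addition of the single sentence $\neg\theta$, using that $\neg\theta$ is a negative (hence Harrop) sentence so that adding it does not destroy the disjunction and numerical existence properties, by the standard slash/Aczel argument or via the realizability characterization — the existence property holds as an external fact, and moreover its proof is formalizable in $\PA$ (everything in sight is about provability predicates of an r.e. theory). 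Therefore $\mathbb{N}\models\ep(\TT)$, and a fortiori $\N\models\ep(\TT)$ if $\N\equiv\mathbb{N}$ on $\Pi_2$ sentences; cleanest is simply to take $\M:=\mathbb{N}$, which satisfies $\PRA$, satisfies $\con(\TT)$ (true consistency), and satisfies $\ep(\TT)$ (the genuine existence property of $\TT$).

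With such an $\M$ in hand, Theorem \ref{t14} applies directly: $\C_{\M,\TT}=\TT+\ext(\M,\TT)$ is consistent and has the existence and disjunction properties (Theorem \ref{t12}), and $\K(\M,\C_{\M,\TT})\Vdash\phi$ iff $\C_{\M,\TT}\vdash_i\phi$. Taking $\M=\mathbb{N}$ we have $\ext(\mathbb{N},\TT)=\{\phi\mid\mathbb{N}\models\pr_\TT(\ulcorner\phi\urcorner)\}=\{\phi\mid\TT\vdash_i\phi\}$, so $\C_{\mathbb{N},\TT}=\TT$ itself, and Theorem \ref{t12} instantiated at $\mathbb{N}$ yields precisely that $\HA+\neg\theta+\mp$ is consistent and has the existence and disjunction properties — which is the two-item claim of Lemma \ref{l17}.

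The main obstacle is the external verification that $\ep(\TT)$ is true, i.e. that $\HA+\neg\theta+\mp$ genuinely has the existence property; the consistency of $\TT$ and membership in $\II$ are comparatively routine. The subtlety is that adding $\mp$ and adding $\neg\theta$ each individually preserves the existence property (for $\mp$ via the Friedman/Dragalin translation or the realizability-with-truth argument, and for a purely negative sentence like $\neg\theta$ via Harrop-formula invariance of the Aczel slash), but one should confirm the two extensions do not interfere — this is handled by noting $\neg\theta\in$ (the class of) almost-negative formulas so that the standard q-realizability argument showing $\HA+\mp$ has the existence property goes through verbatim with $\neg\theta$ added as an axiom, since the realizer of an almost-negative axiom can be found uniformly. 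I would present that verification as the technical heart of the lemma and keep the rest as a bookkeeping application of Theorems \ref{t12} and \ref{t14}.
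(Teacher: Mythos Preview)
Your plan inverts the logical dependency. Theorem \ref{t12} (and hence Theorem \ref{t14}) takes $\M\models\ep(\TT)$ as a \emph{hypothesis}; with $\M=\mathbb{N}$, this hypothesis is exactly the statement that $\TT=\HA+\neg\theta+\mp$ has the existence property, which is part~(2) of the lemma you are trying to prove. You acknowledge this yourself (``the main obstacle is the external verification that $\ep(\TT)$ is true''), but once that external verification is done, your appeal to Theorem \ref{t12} returns nothing: you observe $\C_{\mathbb{N},\TT}=\TT$ and conclude that $\TT$ has the existence property, which is precisely what you fed in. In the paper's architecture, Lemma \ref{l17} is the \emph{input} that makes Theorem \ref{t14} applicable in Corollary \ref{finF}, not an output of it.

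The substantive content of your proposal --- prove the existence property directly, e.g.\ by noting that $\HA+\mp$ has it and that adding the Harrop sentence $\neg\theta$ preserves it via the Aczel slash or $q$-realizability --- is a legitimate alternative to the paper's method and could stand as the proof of part~(2) without any mention of Theorem \ref{t14}. The paper instead argues via the Smory\'nski collection model $\K(\mathbb{N},\HA+\neg\theta+\mp)$: it checks directly that the root forces $\HA$, $\neg\theta$, and each instance of $\mp$, and reads off numerical existence from forcing at the root. For part~(1) the paper's argument is considerably simpler than yours: since $\PA\vdash_c\neg\theta$ (the consequent of $\theta$ asserts a total recursive decision procedure for halting, which $\PA$ refutes) and $\PA\vdash_c\mp$, the theory $\HA+\neg\theta+\mp$ is a subtheory of $\PA$ and therefore consistent --- no realizability model is needed.
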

\begin{proof}
$ $
\begin{enumerate}
    \item It is easy to see that $\PA\vdash_c\neg\theta$ and also $\PA\vdash_c\mp$. So $\HA+\neg\theta+\MP$ is a sub-theory of $\PA$ and it is consistent.
    \item We will prove the existence property of $\HA+\neg\theta+\mp$ here. The disjunction property will follow from it like before. This part is a standard application of Kripke models (see \cite{sm}). Let $\ex x\psi(x)$ be an $\L$ sentence such that $\HA+\neg\theta+\mp\vdash_i \ex x\psi(x)$, but for every natural number $n$, $\HA+\neg\theta+\mp\not\vdash_i \psi(\bar{n})$. It is well-know that $\K(\mathbb{N},\HA+\neg\theta+\mp)$ is a well-defined Kripke model and moreover $\K(\mathbb{N},\HA+\neg\theta+\mp)\Vdash \HA$ (see Theorem 5.2.4 in \cite{sm}). Moreover we can assume that $\K_\bot$ (Note that $\bot\in \S(\mathbb{N},\HA+\neg\theta+\mp)$) is a Kripke model with just one node with the classical structure $\mathbb{N}$. Note that $r\nVdash \theta$, because otherwise by the monotonicity of forcing relation for every $\phi\in \S(\mathbb{N},\HA+\neg\theta+\mp)$, $\K_\phi\Vdash \theta$ which is not true. Moreover for every node $k\neq r$, $k\Vdash\neg\theta$, so with the last argument $r\Vdash \neg\theta$ which implies $\K(\mathbb{N},\HA+\neg\theta+\mp)\Vdash \neg\theta$. Note that $\mp$ is forced in every node $k\neq r$. So we only need to show that $r\Vdash \mp$. For this matter suppose $r\Vdash \all x(\phi(x,\vec{\bar{a}})\lor\neg\phi(x,\vec{\bar{a}}))\land \neg\neg\ex x\phi(x,\vec{\bar{a}})$ where $\vec{a}\in\mathbb{N}$. If for every $n\in\mathbb{N}$, $r\nVdash \phi(\bar{n},\vec{\bar{a}})$, then because of decidability of $\phi(x,\vec{\bar{a}})$ in the point of view of $r$, for every $n\in\mathbb{N}$, $r\Vdash \neg\phi(\bar{n},\vec{\bar{a}})$. This implies $\K_\bot\Vdash \all x\neg\phi(x,\vec{\bar{a}})$. But this leads to a contradiction because $\K_\bot\Vdash \neg\neg\ex x\neg\phi(x,\vec{\bar{a}})$. This means that there exists a natural number $n$ such that $r\Vdash \phi(\bar{n},\vec{\bar{a}})$.
    
    By the above arguments, we have $$\K(\mathbb{N},\HA+\neg\theta+\mp)\Vdash \HA+\neg\theta+\mp.$$ So $\K(\mathbb{N},\HA+\neg\theta+\mp)\Vdash \ex x\psi(x)$. This implies that there exists a natural number $n$ such that $r\Vdash \psi(\bar{n})$. But this leads to a contradiction because we know $\K_{\psi(\bar{n})}\nVdash \psi(\bar{n})$. This implies that our assumption was false and there exists a natural number $n$ such that $\HA+\neg\theta+\mp\vdash_i \psi(\bar{n})$.
\end{enumerate}
\end{proof}

The following corollary is the first application of Theorem \ref{t14}.
\begin{corollary}\label{finF}
There exists a rooted Kripke model of $\HA+\neg\theta+\mp$ which is not locally $\I\Delta_1$.
\end{corollary}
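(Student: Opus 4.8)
The plan is to apply Theorem \ref{t14} with $\TT := \HA+\neg\theta+\mp$ and a suitably chosen structure $\M$. First I would observe that $\TT\in\II$: consistency is Lemma \ref{l17}(1), the inclusion $i\PRA\subseteq\TT$ is clear since $\HA\supseteq i\PRA$, and the axiom set is recursively enumerable because it is $\HA$ (r.e.) together with the two fixed sentences $\neg\theta$ and all instances of $\mp$ (a recursive schema). To invoke Theorem \ref{t14} I need a structure $\M\models\PRA+\ep(\TT)+\con(\TT)$ that additionally fails $\I\Delta_1$, and I need to know that $\C_{\M,\TT}$ proves enough of $\TT$ that the resulting Kripke model $\K(\M,\C_{\M,\TT})$ actually forces $\HA+\neg\theta+\mp$.

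The key step is the choice of $\M$. Lemma \ref{l17} gives the existence property of $\TT$ "in the metatheory"; the point of the $\ep(\TT)$ hypothesis in Theorem \ref{t14} is to have this fact available internally. Since $\TT$ genuinely has the existence property (Lemma \ref{l17}(2)), the sentence $\ep(\TT)$ is a true $\Pi_2$ sentence, i.e. $\mathbb{N}\models\ep(\TT)$; likewise $\mathbb{N}\models\con(\TT)$ by Lemma \ref{l17}(1). Moreover $\ep(\TT)$ and $\con(\TT)$ are both (equivalent to) $\Pi_2$ sentences, so they lie in $\th_{\Pi_2}(\mathbb{N})$. By Theorem \ref{t9} there is a model $\M\models\th_{\Pi_2}(\mathbb{N})$ with $\M\not\models\I\Delta_1$; since $\PRA$, $\ep(\TT)$, $\con(\TT)$ are all consequences of (or members of) $\th_{\Pi_2}(\mathbb{N})$, this $\M$ satisfies $\PRA+\ep(\TT)+\con(\TT)$. (One should double-check that $\con(\TT)$, a $\Pi_1$ sentence, and $\ep(\TT)$, a $\Pi_2$ sentence, are indeed captured by $\th_{\Pi_2}(\mathbb{N})$ — they are, by their syntactic form.)

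With this $\M$ fixed, Theorem \ref{t14} yields a well-defined rooted Kripke model $\K := \K(\M,\C_{\M,\TT})$ with root $r$ satisfying $\M_r=\M$ and $\K\Vdash\phi\Leftrightarrow\C_{\M,\TT}\vdash_i\phi$ for all $\L(\M)$ sentences $\phi$. Since $\HA+\neg\theta+\mp=\TT\subseteq\C_{\M,\TT}$, in particular $\C_{\M,\TT}\vdash_i\chi$ for every axiom $\chi$ of $\HA+\neg\theta+\mp$, so $\K\Vdash\HA+\neg\theta+\mp$. (For $\mp$, which is a schema, note each instance is a single sentence provable in $\TT$, hence forced; similarly for the induction schema of $\HA$.) Finally $\M_r=\M\not\models\I\Delta_1$, so $\K$ is not locally $\I\Delta_1$, which is the claim. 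The main obstacle I anticipate is the bookkeeping in the previous paragraph: verifying carefully that $\ep(\TT)$ really is $\Pi_2$ and $\con(\TT)$ really is $\Pi_1$ in the present formalization (so that Theorem \ref{t9}'s model, chosen for $\th_{\Pi_2}(\mathbb{N})$, indeed validates them), and confirming that $\TT$'s axiom set meets clause (3) of $\II$ so that $\pr_\TT$, $\ep(\TT)$, $\con(\TT)$ are legitimately defined — everything else is a direct citation of Theorem \ref{t14}, Theorem \ref{t9}, and Lemma \ref{l17}.
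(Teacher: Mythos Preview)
Your proposal is correct and follows essentially the same approach as the paper's proof: pick $\M\models\th_{\Pi_2}(\mathbb N)$ with $\M\not\models\I\Delta_1$ via Theorem \ref{t9}, observe that $\ep(\TT)$ and $\con(\TT)$ are true $\Pi_2$ sentences (by Lemma \ref{l17}) hence hold in $\M$, and then apply Theorem \ref{t14}. Your write-up is actually more explicit than the paper's on the bookkeeping points (checking $\TT\in\II$, noting $\PRA\subseteq\th_{\Pi_2}(\mathbb N)$, and spelling out that $\M_r=\M$ comes from the definition of $\K(\M,\C_{\M,\TT})$).
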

\begin{proof}
By Theorem \ref{t9} there exists a model $\M\models \th_{\Pi_2}(\mathbb{N})$ such that $\M\not\models {\bf B}\Sigma_1$ and hence by Theorem \ref{eq} $\M\not\models\I\Delta_1$. Note that by Lemma \ref{l17} $\HA+\neg\theta+\mp$ is consistent and has the existence property. This implies that $\ep(\HA+\neg\theta+\mp)$ and $\con(\HA+\neg\theta+\mp)$ are true in $\mathbb{N}$. Note that these sentences are $\Pi_2$, so they are also true in $\M$. This implies that $\M$ satisfies the conditions needed in the Theorem \ref{t14}, hence $$\K(\M,\C_{\M,\HA+\neg\theta+\mp})\Vdash \HA+\neg\theta+\mp$$ and also it is not locally $\I\Delta_1$.
\end{proof}

It is worth mentioning that $\HA+\neg\theta+\mp$ does not prove anything contradictory with $\PA$ and in some sense, it is close to $\PA$, but still, we were able to construct a Kripke model of it which is not locally $\I\Delta_1$.
The following corollary is the second application of Theorem \ref{t14}.
\begin{corollary}\label{forceHA}
There exists a rooted Kripke model $\K\Vdash\HA$ which is not locally $\I\Delta_1$, but for every $\L$ sentence $\phi$,
$$\K\Vdash \phi \Leftrightarrow \HA\vdash_i\phi.$$
\end{corollary}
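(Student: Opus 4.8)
The plan is to apply Theorem~\ref{t14} with $\TT=\HA$ and a carefully chosen model $\M$, exactly as in Corollary~\ref{finF}, and then to upgrade the conclusion so that the resulting Kripke model forces precisely the $\HA$-provable sentences. First I would invoke Theorem~\ref{t9} to obtain $\M\models\th_{\Pi_2}(\mathbb{N})$ with $\M\not\models{\bf B}\Sigma_1$, hence $\M\not\models\I\Delta_1$ by Theorem~\ref{eq}. Since $\HA\in\II$ and $\HA$ has the existence property (Theorem~\ref{dis}), the sentences $\ep(\HA)$ and $\con(\HA)$ are true in $\mathbb{N}$; being $\Pi_2$, they remain true in $\M$. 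Thus $\M\models\PRA+\ep(\HA)+\con(\HA)$, the hypothesis of Theorem~\ref{t14} is met, and $\K:=\K(\M,\C_{\M,\HA})$ is a well-defined Kripke model with $\K\Vdash\HA$ whose root has classical structure $\M$, so $\K$ is not locally $\I\Delta_1$.

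It remains to show that for every $\L$ sentence $\phi$, $\K\Vdash\phi$ iff $\HA\vdash_i\phi$. The direction $\HA\vdash_i\phi\Rightarrow\K\Vdash\phi$ is immediate from $\K\Vdash\HA$ and soundness. For the converse, by Theorem~\ref{t14}, $\K\Vdash\phi$ implies $\C_{\M,\HA}\vdash_i\phi$, and then by Lemma~\ref{l12} (applicable since $\M\models\PRA$) we get $\M\models\pr_{\HA}(\ulcorner\phi\urcorner)$. Now the point is that $\pr_{\HA}(\ulcorner\phi\urcorner)$ is a $\Sigma_1$ sentence, so it is expressible and its truth in $\M$ must be pushed down to $\mathbb{N}$. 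Here I would use that $\M\models\th_{\Pi_2}(\mathbb{N})$: the sentence $\neg\pr_{\HA}(\ulcorner\phi\urcorner)$ is $\Pi_1$, hence $\Pi_2$, so if $\mathbb{N}\models\neg\pr_{\HA}(\ulcorner\phi\urcorner)$ then $\M\models\neg\pr_{\HA}(\ulcorner\phi\urcorner)$, contradicting what we derived. Therefore $\mathbb{N}\models\pr_{\HA}(\ulcorner\phi\urcorner)$, i.e. $\HA\vdash_i\phi$, which is exactly what we wanted.

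The main subtlety — though not a deep obstacle — is the soundness of the provability predicate at the external level: one must be careful that $\mathbb{N}\models\pr_{\HA}(\ulcorner\phi\urcorner)$ genuinely yields a proof of $\phi$ in $\HA$, which is the standard fact that $\pr_{\HA}$ correctly arithmetizes provability (this is implicit in the setup of Section~2.1, and follows from Theorem~\ref{pr} together with the $\Sigma_1$-completeness of true $\Sigma_1$ sentences, using that $\pr_{\HA}$ is the $\Sigma_1$ formula $\ex x\,\prof_{\HA}(x,y)$ built from a primitive recursive $\prof_{\HA}$). Once that is granted, the argument is a clean chain: Theorem~\ref{t14} $\leadsto$ Lemma~\ref{l12} $\leadsto$ $\Pi_2$-correctness of $\M$ over $\mathbb{N}$ $\leadsto$ $\HA\vdash_i\phi$. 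I would also remark that this simultaneously shows $\K\nVdash\phi$ whenever $\HA\nvdash_i\phi$, so $\K$ is a genuine witness that forcing exactly $\HA$ does not force local $\I\Delta_1$.
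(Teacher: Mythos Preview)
Your proof is correct and follows essentially the same approach as the paper. The only cosmetic difference is that the paper packages the sentences $\neg\pr_{\HA}(\ulcorner\phi\urcorner)$ for $\HA\nvdash_i\phi$ into an explicit set $\U$ and argues $\M\models\U$ via $\M\models\th_{\Pi_2}(\mathbb{N})$, whereas you argue the same $\Pi_1$-transfer pointwise for each $\phi$; the underlying idea is identical.
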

\begin{proof}
Define $$\U=\{\neg\pr_\HA(\ulcorner\phi\urcorner)\:|\:\HA\not\vdash_i\phi,\phi\text{ is a sentence}\}.$$
Let $\TT:=\PRA+\ep(\HA)+\U$. It is easy to see that $\TT$ is a $\Pi_2$ axiomatized theory and moreover $\mathbb{N}\models\TT$. By Theorem \ref{t9} there exists a model $\M\models\th_{\Pi_2}(\mathbb{N})$ such that $\M\not\models\I\Delta_1$. By the facts that $\mathbb{N}\models\TT$ and also $\TT$ is a $\Pi_2$ axiomatized theory, we get $\M\models\TT$. So by these explanations, $\M$ has the required property that is needed in Theorem \ref{t14}, hence $\K(\M,\C_{\M,\HA})\Vdash \HA$. This means that for every $\L$ sentence $\phi$, if $\HA\vdash_i\phi$, then $\K(\M,\C_{\M,\HA})\Vdash\phi$. 

For the opposite direction, let $\phi$ be an $\L$ sentence such that $\K(\M,\C_{\M,\HA})\Vdash \phi$. Then by Theorem \ref{t14} $\C_{\M,\HA}\vdash_i\phi$. So by Lemma \ref{l12} $\M\models\pr_\HA(\ulcorner\phi\urcorner)$. If $\HA\not\vdash_i\phi$, then $\neg\pr_\HA(\ulcorner\phi\urcorner)\in\U$, hence $\TT\vdash_c\neg\pr_\HA(\ulcorner\phi\urcorner)$ which implies $\M\models\neg\pr_\HA(\ulcorner\phi\urcorner)$, but this leads to a contradiction, hence $\HA\vdash_i\phi$.
\end{proof}

As we already mentioned in the Introduction, we can get more Kripke models for $\HA+\ect$ from the first construction than by the second construction. We will show this fact in the rest of this subsection. For this matter, we need the following theorem.
\begin{theorem}\label{emil}
For any constant $k$, there is no consistent $\Pi_k$-axiomatized theory $\TT$ such that $\TT\vdash_c \PA$.
\end{theorem}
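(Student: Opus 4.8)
The plan is to derive this from a reflection/incompleteness argument. Suppose toward a contradiction that $\TT$ is a consistent $\Pi_k$-axiomatized theory with $\TT\vdash_c\PA$. The key point is that a $\Pi_k$-axiomatized theory proving $\PA$ must, inside $\PA$ itself, prove its own $\Pi_k$-reflection or at least the consistency of large finitely axiomatized subtheories, and this collides with Gödel's second incompleteness theorem. Concretely, since the axioms of $\TT$ are $\Pi_k$, the provability predicate $\pr_\TT$ has a $\Sigma_{k+1}$ (or so) complexity, and $\PA$ proves the $\Pi_k$-reflection principle $\Pi_k\text{-}\rfn(\PA)$ — indeed $\PA$ proves the full uniform reflection schema for each fixed complexity level over itself restricted to a finite subtheory, i.e. for every $n$, $\PA$ proves $\con(\PA_n)$ where $\PA_n$ is the (finitely axiomatized) fragment using induction up to $\Sigma_n$.

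The first step I would carry out is to pin down the precise partial-conservativity statement: for any $\Pi_k$-axiomatized theory $\TT$ with $\TT\vdash_c\PA$, one shows $\PA\vdash_c\con(\TT_{\text{fin}})$ for suitable finite $\TT_{\text{fin}}\subseteq\TT$, using that $\PA$ proves $\Pi_k$-reflection over any finitely axiomatized subtheory of itself (this is a standard fact: the hierarchy of fragments $\I\Sigma_n$ provably, in $\PA$, satisfies $\con(\I\Sigma_n)$ and more generally $\Pi_n$-reflection). Then, because $\TT\vdash_c\PA$, compactness gives a finite $\TT_0\subseteq\TT$ with $\TT_0\vdash_c\PA'$ for a finite $\PA'$ strong enough that $\PA'\vdash_c\con(\TT_0)$ — but then $\TT_0\vdash_c\con(\TT_0)$, contradicting Gödel's second incompleteness theorem applied to the consistent finitely axiomatized theory $\TT_0$. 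The heart of the matter is choosing the fragment levels so that the reflection principle available in $\PA$ about finite subtheories is strong enough to certify $\con$ of the particular finite piece of $\TT$ that reproves that very fragment; this bootstrapping is what makes the argument go through. An alternative, cleaner route: use that $\TT$ being $\Pi_k$-axiomatized makes $\pr_\TT$ provably (in $\PA$) behave well, so $\PA$, hence $\TT$, proves $\con(\I\Sigma_m)$ for all $m$; pick $m$ large enough that a finite subtheory $\TT_0\subseteq\TT$ proves $\I\Sigma_m$ and $\I\Sigma_m$ proves $\con(\TT_0)$ (possible since $\TT_0$ is $\Pi_k$ with $k\le m$ and $\I\Sigma_m$ proves $\Pi_k$-reflection over finite theories), again yielding $\TT_0\vdash_c\con(\TT_0)$.

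I expect the main obstacle to be making the complexity bookkeeping airtight: ensuring that for a $\Pi_k$-axiomatized $\TT$, one genuinely has $\PA\vdash_c \Pi_k\text{-}\rfn$ applied to finite subtheories of $\TT$, and that the finite subtheory $\TT_0$ extracted by compactness is simultaneously (i) strong enough to prove the fragment $\I\Sigma_m$ that certifies $\con(\TT_0)$, and (ii) still only $\Pi_k$ in its axioms so that the reflection principle actually applies to it. This is the circular-looking step that requires care — one must fix $m$ first as a function of $k$ and of the finite $\PA'$ needed, then extract $\TT_0$, and verify the levels line up. Once that is arranged, the contradiction with Gödel's second incompleteness theorem for the consistent finitely axiomatized $\TT_0$ is immediate.
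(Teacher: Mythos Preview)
The paper does not give a proof of this theorem at all; it simply cites Je\v{r}\'abek's MathOverflow answer \cite{jer}. Your outline is essentially the standard argument (and is, as far as I know, the one Je\v{r}\'abek sketches there): reduce to a finite $\Pi_k$ subtheory via compactness, use that a sufficiently strong fragment $\I\Sigma_m$ (with $m$ depending only on $k$, via the partial $\Sigma_k$/$\Pi_k$ truth predicate and formalized cut elimination) proves the relevant reflection, and obtain $\TT_0\vdash_c\con(\TT_0)$ in contradiction with G\"odel's second incompleteness theorem. So your approach is correct in spirit and matches what the paper defers to.

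One imprecision worth fixing: you write that ``$\I\Sigma_m$ proves $\con(\TT_0)$'' and earlier that ``$\PA\vdash_c\con(\TT_{\text{fin}})$''. As stated this is unjustified---$\TT$ is only assumed consistent, not true, so neither $\PA$ nor $\I\Sigma_m$ need prove $\con(\TT_0)$ outright. The correct statement is the \emph{conditional} one: for every $\Pi_k$ sentence $\tau$, $\I\Sigma_m\vdash_c \tau\to\con(\tau)$ (where $\con(\tau)$ abbreviates consistency of, say, $\sf Q+\tau$). This is exactly what the truth-predicate argument yields, and it is all you need: taking $\tau$ to be the conjunction of the axioms of $\TT_0$ and using $\TT_0\vdash_c\I\Sigma_m$, you get $\TT_0\vdash_c\con(\TT_0)$. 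Your worry about circularity in choosing $m$ versus extracting $\TT_0$ is then harmless, since $m$ depends only on $k$ and not on $\TT_0$; fix $m=m(k)$ first, then apply compactness to find a finite $\TT_0\subseteq\TT$ with $\TT_0\vdash_c\I\Sigma_m$.
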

\begin{proof}
See \cite{jer}.
\end{proof}

\begin{theorem}
The following statements are true:
\begin{enumerate}
    \item For every $\L$ structure $\M$, if $\K(\M,\C_{\M,\HA+\ect})\Vdash \HA+\ect$, then $\K(\M,\T)\Vdash \HA+\ect$.
    \item There exists an $\L$ structure $\M$ such that $\K(\M,\T)\Vdash \HA+\ect$, but $\K(\M,\C_{\M,\HA+\ect})\nVdash \HA$.
\end{enumerate}
\end{theorem}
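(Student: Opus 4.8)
The plan is to treat the two items separately; in each, Theorem~\ref{t8} takes care of the ``first construction'' side, and all the work is on the ``second construction'' side.

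\emph{Item 1.} Suppose $\K(\M,\C_{\M,\HA+\ect})\Vdash\HA+\ect$. Then $\K(\M,\C_{\M,\HA+\ect})$ is in particular a rooted Kripke model of $\HA$, and by the definition of the universal model its root $r$ carries exactly the structure $\M$. Hence, by the left-to-right direction of Corollary~\ref{char} (equivalently, by the fact recalled in the Introduction that every node of a Kripke model of $\HA$ is locally $\th_{\Pi_2}(\PA)$), we get $\M\models\th_{\Pi_2}(\PA)$. Now Theorem~\ref{t8} applies: $\K(\M,\T)$ is a well-defined Kripke model with $\K(\M,\T)\Vdash\phi\iff\T\vdash_i\phi$ for every $\L(\M)$-sentence $\phi$; since $\T=\HA+\ect+\H$ extends $\HA+\ect$ it proves each axiom of $\HA+\ect$, so $\K(\M,\T)\Vdash\HA+\ect$. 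That is the whole of Item~1 — it is short because the content is already in Theorem~\ref{t8} and Corollary~\ref{char}.

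\emph{Item 2, reduction.} The idea is to choose $\M$ so that $\C_{\M,\HA+\ect}$ is \emph{inconsistent}, which degenerates the universal model. Note $\HA+\ect\in\II$, so $\pr_{\HA+\ect}$ and $\ext(\M,\HA+\ect)$ make sense, and $\C_{\M,\HA+\ect}=\HA+\ect+\ext(\M,\HA+\ect)$ is inconsistent exactly when $\bot\in\ext(\M,\HA+\ect)$, i.e.\ when $\M\models\pr_{\HA+\ect}(\ulcorner\bot\urcorner)$, i.e.\ when $\M\models\neg\con(\HA+\ect)$. When this holds, $\S(\M,\C_{\M,\HA+\ect})=\emptyset$, so $\K(\M,\C_{\M,\HA+\ect})$ is the single-node Kripke model whose structure is $\M$; on a single-node model forcing coincides with ordinary satisfaction (an easy induction on formula complexity, since the root has no proper successor), so $\K(\M,\C_{\M,\HA+\ect})\Vdash\psi\iff\M\models\psi$, and since the non-logical axioms of $\HA$ are exactly those of $\PA$ this forces $\HA$ iff $\M\models\PA$. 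So it suffices to exhibit an $\L$-structure $\M$ with
$$\M\models\th_{\Pi_2}(\PA)+\neg\con(\HA+\ect)\qquad\text{and}\qquad\M\not\models\PA:$$
then Theorem~\ref{t8} (via $\M\models\th_{\Pi_2}(\PA)$) gives $\K(\M,\T)\Vdash\HA+\ect$, while the above gives $\K(\M,\C_{\M,\HA+\ect})\nVdash\HA$.

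\emph{Item 2, construction, and the main obstacle.} Let $\TT^{\ast}:=\th_{\Pi_2}(\PA)+\neg\con(\HA+\ect)$. This theory is $\Pi_2$-axiomatized, since $\neg\con(\HA+\ect)$ is $\Sigma_1$, hence equivalent to a $\Pi_2$ sentence. It is consistent: $\PA$ proves $\con(\HA+\ect)\to\con(\PA)$ (because $\HA\subseteq\HA+\ect$, and $\con(\HA)\to\con(\PA)$ is provable in $\PA$ via the negative translation), so by G\"odel's second incompleteness theorem $\PA\nvdash\con(\HA+\ect)$, whence the subtheory $\th_{\Pi_2}(\PA)$ of $\PA$ does not prove $\con(\HA+\ect)$ either, and $\TT^{\ast}$ is consistent. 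Now Theorem~\ref{emil} with $k=2$ says no consistent $\Pi_2$-axiomatized theory proves $\PA$; hence $\TT^{\ast}\nvdash_c\PA$, so there is a model $\M\models\TT^{\ast}$ with $\M\not\models\PA$, which is exactly what we need. I expect this last point — recognizing that $\th_{\Pi_2}(\PA)+\neg\con(\HA+\ect)$ is a \emph{consistent}, $\Pi_2$-axiomatized theory, so that Theorem~\ref{emil} produces a model of it that is not a model of $\PA$ — to be the only genuinely nontrivial step; everything else is bookkeeping around Theorem~\ref{t8}, the definitions, and the degeneracy of one-node Kripke models.
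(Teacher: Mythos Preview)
Your proof is correct and follows essentially the same approach as the paper. The only cosmetic differences are that for Item~1 you invoke Corollary~\ref{char} (or the stated fact that Kripke models of $\HA$ are locally $\th_{\Pi_2}(\PA)$) directly rather than unrolling that argument in place, and for Item~2 you work with $\neg\con(\HA+\ect)$ from the start whereas the paper uses $\neg\con(\HA)$ (which immediately implies $\neg\con(\HA+\ect)$); the structure of the argument---consistency via G\"odel~II, $\Pi_2$-axiomatizability, Theorem~\ref{emil} to get $\M\not\models\PA$, and the collapse of $\K(\M,\C_{\M,\HA+\ect})$ to a single classical node---is identical.
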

\begin{proof}
$ $

\begin{enumerate}
    \item Suppose $\K(\M,\C_{\M,\HA+\ect})\Vdash \HA+\ect$. Let $\phi:=\all \vec{x}\ex\vec{y}\psi(\vec{x},\vec{y})$ be a $\Pi_2$ sentence such that $\PA\vdash_c \phi$. Then by $\Pi_2$ conservativity of $\HA$ over $\PA$, we have $\HA\vdash_i \phi$, hence $\K(\M,\C_{\M,\HA+\ect})\Vdash \phi$. This implies $r\Vdash \all \vec{x}\ex\vec{y}\psi(\vec{x},\vec{y})$. So for every $\vec{a}\in\M$:
    \begin{enumerate}
        \item $\Rightarrow r\Vdash \ex\vec{y}\psi(\un{\vec{a}},\vec{y})$,
        \item $\Rightarrow\text{ there exist } \vec{b}\in\M\text{ such that }r\Vdash \psi(\un{\vec{a}},\un{\vec{b}})$,
        \item $\Rightarrow \M\models\psi(\un{\vec{a}},\un{\vec{b}})$.
    \end{enumerate}
    Hence $\M\models\phi$. This implies that $\M\models \th_{\Pi_2}(\PA)$, so by Theorem \ref{t8} $\K(\M,\T)\Vdash \HA+\ect$.
    \item By G\"odel's second incompleteness theorem, $\PA+\neg\con(\PA)$ is consistent. So this implies that $\th_{\Pi_2}(\PA)+\neg\con(\HA)$ is also consistent. $\th_{\Pi_2}(\PA)+\neg\con(\HA)$ is a $\Pi_2$-axiomatized theory, hence by Theorem \ref{emil} there exists a model $\M\models\th_{\Pi_2}(\PA)+\neg\con(\HA)$ such that $\M\not\models\PA$. Note that by Theorem \ref{t8} $\K(\M,\T)\Vdash \HA+\ect$. On the other hand $\M\models\neg\con(\HA+\ect)$, so $\bot\in\ext(\M,\HA+\ect)$. This implies $\C_{\M,\HA+\ect}\vdash_i\bot$. Hence $\S(\M,\C_{\M,\HA+\ect})=\varnothing$. This means that $\K(\M,\C_{\M,\HA+\ect})$ has only one node $r$ such that $\M_r=\M$. Note that $\M\not\models\PA$, so $r\not\Vdash\HA$ and this completes the proof.
\end{enumerate}
\end{proof}
\section{On binary Kripke models for intuitionistic first-order logic}
In this section, we will prove that every countable rooted Kripke model $\K$ (there exists a node $k$ in $\K$ such that for every $k$ in $\K$, $k\leq k'$) can be transformed to a Kripke model $\K'$ with the infinite full binary tree as Kripke frame such that $\K$ and $\K'$ force the same sentences. This was known for the case of finite Kripke models of intuitionistic propositional logic (see Theorem 2.21 and Corollary 2.22 of \cite{modal}), but to best of our knowledge it was not mentioned for the case of Kripke models of intuitionistic first-order logic in the literature. The transformation for Kripke models of intuitionistic first-order logic can be done in the same way that was done for the case of finite Kripke models of intuitionistic propositional logic, but for the sake of completeness we will state the theorem and prove it in this section.

Let $\Gamma=\{0,1\}$ and $\Gamma^*$ be the set of all finite binary strings (including empty string $\lambda$). For every $x,y\in\Gamma^*$, $x\preceq y$ iff $x$ is a prefix of $y$. 
\begin{lemma}\label{bin}
Let $\K=(K,\leq,\M)$ be a countable rooted Kripke model in a language $\sigma$. Then there is an onto function $f:\Gamma^*\to K$, such that:
\begin{enumerate}
    \item $\K'=(\Gamma^*,\preceq,\M')$ is a Kripke model where $\M'$ is defined as $\M'_x=\M_{f(x)}$ for every $x\in\Gamma^*$,
    \item for every $k\in K$, for every $\sigma({\M_k})$ sentence $\phi$, and for every $x\in\Gamma^*$ such that $f(x)=k$, $x\Vdash \phi$ iff $k\Vdash \phi$.
\end{enumerate}
\end{lemma}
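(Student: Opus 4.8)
The plan is to build the tree-unraveling of $\K$ by a level-by-level construction, where at each level we carefully arrange that every node of $K$ already produced has at least two successors among the freshly created strings, and that the new strings not earmarked for "fresh" nodes are used to keep copying nodes of $K$ so that the map stays onto and monotone. Concretely, I would enumerate $K$ as $k_0,k_1,k_2,\dots$ (using countability; if $K$ is finite the same argument works and is easier), put $f(\lambda)=k_0$ (the root, which exists since $\K$ is rooted), and then define $f$ on strings of length $n+1$ in terms of $f$ on strings of length $n$. For a string $x$ of length $n$ with $f(x)=k$, I want the two children $x0$ and $x1$ to map to nodes of $K$ that are $\geq k$ in $(K,\leq)$: the default is to set $f(x0)=f(x1)=k$, i.e. to "stall" at $k$, but at a scheduled stage I instead route one child to a chosen successor of $k$ in $K$, thereby making progress. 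A standard bookkeeping (dovetailing over pairs $(\text{node already hit},\ \text{target node of }K)$) guarantees that every $k_m$ eventually appears as some $f(x)$: since $\K$ is rooted and $\leq$ is a partial order, for any $k_m$ there is a $\leq$-chain from $k_0$ to $k_m$, and we can walk that chain down a branch of $\Gamma^*$, stalling except when we advance one link; scheduling all such chains in the dovetail makes $f$ onto. This defines $f:\Gamma^*\to K$, and by construction $x\preceq y$ implies $f(x)\leq f(y)$, because along every edge of $\Gamma^*$ we either stay put or move $\leq$-upward in $K$.

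Next I would verify that $\K'=(\Gamma^*,\preceq,\M')$ with $\M'_x:=\M_{f(x)}$ is a genuine Kripke model in the sense of the definition in the Preliminaries. Condition (1) is immediate since $(\Gamma^*,\preceq)$ is a nonempty partial order. For (2), each $\M'_x=\M_{f(x)}$ is a classical $\sigma(\M_{f(x)})$-structure, so we take $\sigma(\M'_x):=\sigma(\M_{f(x)})$. For (3), if $x\preceq y$ then $f(x)\leq f(y)$, so by condition (3) for $\K$ we get $\sigma(\M_{f(x)})\subseteq\sigma(\M_{f(y)})$ and $\M_{f(y)}\models\diag^+(\M_{f(x)})$; translating through $\M'$, this is exactly $\sigma(\M'_x)\subseteq\sigma(\M'_y)$ and $\M'_y\models\diag^+(\M'_x)$. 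Hence $\K'$ is well-defined.

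Finally, the forcing equivalence in item (2): for every $k\in K$, every $\sigma(\M_k)$-sentence $\phi$, and every $x$ with $f(x)=k$, we show $x\Vdash\phi$ iff $k\Vdash\phi$, by simultaneous induction on the complexity of $\phi$ (simultaneously over all such $x$ and $k$). The atomic case is immediate since $\M'_x=\M_k$. The $\land,\lor,\ex$ cases are routine from the induction hypothesis (for $\ex$, note $\sigma(\M'_x)=\sigma(\M_k)$, so the witnessing constants match up). The cases $\to$, $\neg$, and $\all$ are the ones requiring the combinatorics of $f$: here one uses that $f$ maps the up-set $\{y : x\preceq y\}$ \emph{onto} the up-set $\{k' : k\leq k'\}$. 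The $\supseteq$ inclusion of up-sets is the onto-ness we built in (every $k'\geq k$ is reached from below, and by redirecting the chain through a descendant of $x$ — which the dovetailing must also arrange, not merely reaching $k'$ from the root but reaching it from \emph{every} $x$ with $f(x)=k'$'s predecessor on the chain), while the $\subseteq$ inclusion is monotonicity $x\preceq y\Rightarrow f(x)\leq f(y)$. Granting that, $x\Vdash\psi\to\eta$ unfolds to a condition quantifying over $y\succeq x$, which by the up-set surjectivity is equivalent to the condition over $k'\geq k$ defining $k\Vdash\psi\to\eta$, and similarly for $\neg$ and $\all$; the induction hypothesis closes each equivalence.

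The main obstacle is the bookkeeping in the first paragraph: one must ensure not just that $f$ is onto, but that for \emph{every} string $x$ the restriction of $f$ to the subtree above $x$ is onto the up-set of $f(x)$ in $K$ — otherwise the $\to$, $\neg$, $\all$ cases of the induction fail. Arranging this requires a dovetailing that, at every node $x$ already constructed and for every $k'\geq f(x)$ in $K$, eventually drives some descendant of $x$ to $k'$ by stalling-and-advancing along a $\leq$-chain from $f(x)$ to $k'$; since $K$ is countable and the binary tree is infinitely branching downward this is a standard (if slightly fiddly) priority-free scheduling. Everything else is bookkeeping of monotonicity and a routine structural induction.
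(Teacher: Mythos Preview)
Your proposal is correct and takes essentially the same approach as the paper: build a monotone $f$ satisfying the local-surjectivity property you isolate (this is exactly the paper's key Claim), then verify the forcing equivalence by structural induction on $\phi$. The paper's construction of $f$ is more explicit than your level-by-level dovetailing: it first reduces $(K,\leq)$ to a tree with no terminal nodes (via the standard tree-unraveling and by placing a constant $\omega$-chain above each leaf), fixes for every $k$ a surjection $g_k:\mathbb{N}\to\N_k$ onto the immediate successors that hits each one infinitely often, and then extends $f$ in stages by, from each current leaf $y$, stalling along $y0^m$ and sending $y0^m1$ to $g_{f(y)}(m)$---so each leaf gets infinitely many advancing slots at once, which delivers your up-set--onto--up-set condition without any separate scheduling argument.
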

\begin{proof}
Without loss of generality, we can assume $(K,\leq)$ is a tree (see Theorem 6.8 in the second chapter of \cite{tv}) with the root $r$. 
Also, we can assume that for every $k\in K$, there is a $k'\in K$ different from $k$ such that $k\leq k'$. This is true because for every $k\in K$ that does not have relation with any other nodes, we can put an infinite countable path above $k$ such that the classical structure of every node in this path is $\M_k$. This transformation does not change the sentences that were forced in the original model. 
For every $k\in K$, define neighbor of $k$ as $$\N_k=\{k'\in K| k\leq k'\land k\neq k' \land \all k''\in K(k\leq k'' \land k''\leq k'\to k=k'' \lor k'=k'')\}.$$ 
For every $k\in K$, fix an onto function $g_k:\mathbb{N}\to \N_k$ such that for every $k'\in \N_k$, $\{ n\in\mathbb{N}|g_k(n)=k'\}$ is infinite. Now we define $f$ inductively with a sequence of partial function $f_0\subset f_1 \subset ...$ and then we put $f=\bigcup_{n\in\mathbb{N}}f_n$. Put $f_0(\lambda)=r$. For a function $h$, let $\D(h)$ be domain of $h$. Let $${\cal A}_n=\{x\in\Gamma^*|x\in\D(f_n),x0\not\in\D(f_n),x1\not\in\D(f_n)\}.$$ Now $f_{n+1}$ is defined inductively from $f_n$ as follows:
$$f_{n+1}(x)=\begin{cases}f_n(x) & x\in{\sf Dom}(f_n)\\f_n(y) & x=y0^m, \text{ for some }y\in{\cal A}_n, m\in\mathbb{N}\\
g_{f_{n}(y)}{(m)} & x=y0^m1, \text{ for some }y\in{\cal A}_n, m\in\mathbb{N}.
\end{cases}$$
It is easy to see that $\D(f)=\Gamma^*$.
\begin{claim}\label{cl}
For every $k\in K$, for every $x\in\Gamma^*$ if $f(x)=k$, then $$\{k'\in K| k\leq k'\}=\{f(y)\in K| y\in\Gamma^*, x\preceq y\}.$$
\end{claim}
This claim is easy to prove considering the definition of $f$ and the fact that $g_k$ functions enumerate neighbors infinitely many times.

Using this claim, we can finish the proof. The proof goes by induction on the complexity of $\phi$. We will only mention a nontrivial case in the induction steps. All other cases can be treated similarly. Let $\phi:= \psi\to \eta$ and $k\Vdash \psi\to\eta$. Let $x\in \Gamma^*$ be such that $f(x)=k$. Suppose for some $y \succeq x$, we know $y\Vdash \psi$. So by the induction hypothesis, $f(y)\Vdash \psi$ and by Claim \ref{cl}, we know $f(y)\geq k$, hence $f(y)\Vdash \eta$, therefore by the induction hypothesis we get $y\Vdash \eta$, so $x\Vdash \phi$.
\end{proof}
\begin{corollary}\label{bin2}
There exists a Kripke model of $\HA$ with $(\Gamma^*,\preceq)$ as the Kripke frame that is not locally $\I\Delta_1$.
\end{corollary}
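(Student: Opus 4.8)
The plan is to combine Corollary \ref{forceHA} (or equally Corollary \ref{fin} or Corollary \ref{finF}) with Lemma \ref{bin}. By Corollary \ref{forceHA} there is a rooted Kripke model $\K\Vdash\HA$ which is not locally $\I\Delta_1$; in particular its root $r$ has $\M_r\not\models\I\Delta_1$, so $\K$ has a node whose classical structure fails $\I\Delta_1$. Note also that $\K=\K(\M,\C_{\M,\HA})$ as constructed there is countable: $\M$ can be taken countable (it is obtained from the L\"owenheim--Skolem theorem applied to a countable theory via Theorem \ref{t9}), the index set $\S(\M,\C_{\M,\HA})$ is countable, and each $\K_\phi$ may be chosen countable by downward L\"owenheim--Skolem for Kripke models; hence the disjoint-union-with-a-root is a countable rooted Kripke model.

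Next I would apply Lemma \ref{bin} to this $\K$, obtaining an onto $f:\Gamma^*\to K$ and a Kripke model $\K'=(\Gamma^*,\preceq,\M')$ with $\M'_x=\M_{f(x)}$ such that for every $k\in K$, every $\sigma(\M_k)$ sentence $\phi$, and every $x$ with $f(x)=k$, we have $x\Vdash\phi$ iff $k\Vdash\phi$. Two things must then be checked. First, $\K'\Vdash\HA$: since $\HA$ is a set of $\L$-sentences (not involving the added constants) and $\K\Vdash\HA$, for every $x\in\Gamma^*$ the node $f(x)$ forces each axiom of $\HA$, so by the forcing-transfer clause of Lemma \ref{bin}, $x$ forces each axiom of $\HA$; thus $\K'\Vdash\HA$. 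Second, $\K'$ is not locally $\I\Delta_1$: pick $x\in\Gamma^*$ with $f(x)=r$ (such $x$ exists since $f$ is onto, e.g. $x=\lambda$ by the construction $f_0(\lambda)=r$); then $\M'_x=\M_r\not\models\I\Delta_1$. Finally, the Kripke frame of $\K'$ is exactly $(\Gamma^*,\preceq)$, the full infinite binary tree, as required.

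I do not expect any serious obstacle here, since all the real work is already packaged in Corollary \ref{forceHA} and Lemma \ref{bin}. The only point deserving a sentence of care is the countability hypothesis of Lemma \ref{bin}: one should remark explicitly that the witnessing model from Corollary \ref{forceHA} can be taken countable, which follows because the underlying classical structure $\M$ from Theorem \ref{t9} can be chosen countable and the Smory\'nski collection $\K(\M,\C_{\M,\HA})$ is then built from countably many countable pieces. With that remark in place, the corollary is immediate.

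\begin{proof}
By Theorem \ref{t9} we may take $\M\models\th_{\Pi_2}(\mathbb{N})$ with $\M\not\models\I\Delta_1$ to be countable. Then, as in the proof of Corollary \ref{forceHA}, $\K:=\K(\M,\C_{\M,\HA})$ is a rooted Kripke model with $\K\Vdash\HA$ whose root $r$ satisfies $\M_r=\M\not\models\I\Delta_1$; moreover $\K$ is countable, since $\S(\M,\C_{\M,\HA})$ is countable and each $\K_\phi$ may be chosen countable. Apply Lemma \ref{bin} to $\K$ to obtain an onto $f:\Gamma^*\to K$ and the Kripke model $\K'=(\Gamma^*,\preceq,\M')$ with $\M'_x=\M_{f(x)}$ such that $x\Vdash\phi$ iff $f(x)\Vdash\phi$ whenever $f(x)=k$ and $\phi$ is a $\sigma(\M_k)$ sentence. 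Since every axiom of $\HA$ is an $\L$ sentence forced at every node of $\K$, it is forced at $f(x)$ for every $x\in\Gamma^*$, and hence at $x$; thus $\K'\Vdash\HA$. Taking $x=\lambda$, we have $f(\lambda)=r$, so $\M'_\lambda=\M\not\models\I\Delta_1$, and therefore $\K'$ is not locally $\I\Delta_1$. Finally, the Kripke frame of $\K'$ is $(\Gamma^*,\preceq)$, the full infinite binary tree.
\end{proof}
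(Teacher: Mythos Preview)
Your proof is correct and follows the same overall strategy as the paper: take one of the earlier rooted Kripke models of $\HA$ that is not locally $\I\Delta_1$, make sure it is countable, and feed it to Lemma~\ref{bin}. The only noteworthy difference is how countability is secured. The paper does it in one stroke: it encodes an arbitrary rooted Kripke model (specifically the one from Corollary~\ref{fin}) as a two-sorted classical structure in which both ``$r\Vdash\phi$'' and ``$\M_r\models\phi$'' are first-order expressible, and then applies the classical downward L\"owenheim--Skolem theorem to that structure; this yields a countable Kripke model preserving both $\HA$ at every node and the failure of the chosen $\Delta_1$-induction instance at the root. You instead work inside the Smory\'nski construction: take $\M$ countable (legitimate, since an elementary countable substructure of the model from Theorem~\ref{t9} still satisfies $\th_{\Pi_2}(\mathbb N)$ and still refutes the same $\I\Delta_1$ instance), observe that $\S(\M,\C_{\M,\HA})$ is then countable, and choose each $\K_\phi$ countable via Kripke completeness for a countable language. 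Your route is more hands-on and exploits the explicit form of the model; the paper's route is a black-box argument that would apply to any rooted Kripke model regardless of how it was built. Either is fine here.
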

\begin{proof}
Let $\K$ be a rooted Kripke model with the root $r$ in a language $\sigma$. Let $\cal U$ be a countable set of sentences of $\sigma$. It is easy to see that $\K$ can be represented by a suitable two-sorted classical structure $\MM_\K$ such that:
\begin{enumerate}
    \item For every $\phi\in{\cal U}$, "$r\Vdash \phi$" is first-order definable in $\MM_\K$ by the sentence $\phi_F$.
    \item For every $\phi\in{\cal U}$, "$\M_r\models \phi$" is first-order definable in $\MM_\K$ by the sentence $\phi_M$.
\end{enumerate}
By applying the downward L\"owenheim–Skolem theorem on $\MM_\K$ we get a countable substructure of $\MM_\K$ like $\MM'_\K$ such that:
\begin{enumerate}
    \item $\MM'_\K$ is a representation of a countable rooted Kripke model in the language $\sigma$.
    \item For every $\phi\in{\cal U}$, $\MM_\K\models \psi$ iff $\MM'_\K\models \psi$, for $\psi\in\{\phi_F,\phi_M\}$.
\end{enumerate}
Let $\KM$ be the rooted Kripke model from Corollary \ref{fin}. Let ${\cal U}=\HA\cup\{\varphi\}$ where $\varphi$ is an instance of $\Delta_1$ induction that fails in the classical structure of the root of $\KM$. Following the same argument on $\KM$ and $\cal U$, we get a countable rooted Kripke model $\K'$ of $\HA$ that is not locally $\I\Delta_1$. Hence applying Lemma \ref{bin} on $\K'$ finishes the proof.
\end{proof}
\section{Concluding remarks and open problems}
Problem \ref{p1} can be asked about other theories than $\HA$. One can ask the same question about arithmetic over sub-intuitionistic logic too. One of these logics is Visser's Basic logic, and its extension Extended Basic logic. The model theory of arithmetic over these logics were investigated in \cite{ru,ah2,aks2}. From the point of view of Problem \ref{p1}, it is proved in \cite{ah} that every irreflexive node in a Kripke model of $\sf BA$ (Basic Arithmetic) is locally $\I\ex^+_1$. So In general, every irreflexive node in a Kripke model of the natural extension of $\sf BA$ such as $\sf EBA$ (Extended Basic Arithmetic) is locally $\I\Sigma_1$ (see Corollary 3.33 in \cite{aks2}). Also it is proved in \cite{aks2} that every Kripke model of $\sf EBA$ is locally $\th_{\Pi_2}(\I\Sigma_1)+\th_{\Pi_1}(\PA)$. Note that every Kripke model of $\HA$ is also a Kripke model of $\sf BA$ and $\sf EBA$. So Corollary \ref{fin} applies to these theories too, and this solves Problem \ref{p1} for these theories. Furthermore, this shows that the known positive results are the best we can get for $\sf BA$ and $\sf EBA$.

Focusing on the proof of Theorem \ref{dis}, we essentially use $\ect$ for proving the existence and disjunction properties of $\T$. We do not know whether $\ect$ is essential for such a model construction, so we have the following question:
\begin{problem}
        Does theory $\HA+\diag(\M)$ has the existence property for every $\M\models \th_{\Pi_2}(\PA)$?
\end{problem}
 An important problem which we could not answer is the following:
\begin{problem}
         Is there any Kripke model $\K\Vdash \HA$ such that for every node $k$ in $\K$, $\M_k\not\models \PA$? 
\end{problem}

Another unsolved question in the direction of completeness with respect to locally $\PA$ Kripke models is the following:
\begin{problem}
         Does $\HA$ have completeness with respect to its class of locally $\PA$ Kripke models?
\end{problem}
By the result of \cite{bu}, for every sentence $\phi$ such that $\HA\nvdash_i \phi$, there exists a locally $\PA$ Kripke model $\K$ such that $\K\not\Vdash \phi$, but this result does not say anything about whether $\K$ is a Kripke model of $\HA$ or not.

We call a rooted tree Kripke frame $(K,\leq)$, a $\PA$-frame iff for every Kripke model $\K\Vdash \HA$ with frame $(K,\leq)$, $\K$ is locally $\PA$. Let ${\mathcal{F}}_\PA$ be the set of all $\PA$-frames. We know that semi narrow rooted tree Kripke frames are in ${\cal F}_\PA$. On the other hand, by Corollary \ref{cl} infinite full binary tree is not in ${\cal F}_\PA$. So we have the following question:
\begin{problem}
         Is there a nice characterization of ${\cal F}_\PA$?
\end{problem}
\subsection*{Acknowledgment}
    We are indebted to Mohammad Ardeshir and truly grateful to him for his careful guidance, invaluable academic teachings, and many invaluable discussions that we have had during the studies in the Department of Mathematical Sciences of the Sharif University of Technology which had a clear impact on our academic life. We also thank him for fruitful discussions about this work. We are grateful to Mohsen Shahriari for fruitful discussions about this work and also reading the draft of this paper and pointing out a gap in the proof of Corollary \ref{bin2}. We are grateful to Pavel Pudl\'ak for fruitful discussions about this work and also reading the draft of the paper and pointing out some English errors in it and also comments which led to a better presentation of the work. We thank Emil Je\v r\'abek for a discussion about this work and also reading the draft of the paper and his comments on it. We also thank Sam Buss, Fedor Pakhomov, and Albert Visser for discussions about this work and answering our questions. The first model construction was done when the author was at the Sharif University of Technology. The second construction was proved while the author was in the Institute of Mathematics of the Czech Academy of Sciences. This research was partially supported by the project EPAC, funded by the Grant Agency of the Czech Republic under
the grant agreement no. 19-27871X.

\end{document}